\newcommand\reallywidehat[1]{%
\savestack{\tmpbox}{\stretchto{%
  \scaleto{%
    \scalerel*[\widthof{\ensuremath{#1}}]{\kern-.6pt\bigwedge\kern-.6pt}%
    {\rule[-\textheight/2]{1ex}{\textheight}}
  }{\textheight}%
}{0.5ex}}%
\stackon[1pt]{#1}{\tmpbox}%
}
\renewcommand{\eqref}[1]{(\ref{#1})}   
\numberwithin{equation}{section}
\theoremstyle{plain}
\newtheorem{theorem}{Theorem}[section]
\newtheorem{lemma}[theorem]{Lemma}
\newtheorem{corollary}[theorem]{Corollary}
\newtheorem{proposition}[theorem]{Proposition}
\theoremstyle{definition}
\newtheorem{remark}[theorem]{Remark}
\newtheorem{example}[theorem]{Example}
\theoremstyle{definition}
\newcommand{\R}{{\mathbbm R}}
\newcommand{\C}{{\mathbbm C}}
\newcommand{\N}{{\mathbbm N}}
\newcommand{\PP}{{\mathbbm P}}
\begin{document}
\title[Connected components of Berkovich fixed locus]{Connected components of Berkovich fixed locus: Potential good reduction}
\date{August 11, 2025}
\author{Niladri Patra}
\address{Reasearch Associate - 1, \\ 
Indian Statistical Institute, Delhi Centre, \\
S. J. S. Sansanwal Marg, New Delhi, India.}
\email{niladri.patra@isid.ac.in, niladript@gmail.com}

\subjclass[2020]{37P50}

\begin{abstract}
    Let $\PP^{1,an}$ be the Berkovich projective line over a complete, algebraically closed, non-Archimedean field. Let $\phi$ be a degree $\geq 2$ rational map with potential good reduction, acting on $\PP^{1,an}$. In this article, we study the topology of the fixed locus of $\phi$. we show that the reduction of $\phi$ at its type~II totally ramified fixed point dictates the topological structure of the fixed locus of $\phi$. We give an easily verifiable equivalent criterion for the fixed locus of $\phi$ to be connected as well as an equivalent criterion for the fixed locus of $\phi$ to be finite. Moreover, we provide a sharp upper bound for the number of connected components of the fixed locus of a rational map with potential good reduction. 
\end{abstract}

\maketitle

\section{Introduction}

Let $K$ be a complete, algebraically closed, non-Archimedean field, of arbitrary characteristic. We assume the non-Archimedean absolute value $|\cdot |$ of $K$ to be non-trivial. Non-Archimedean dynamics on $K$ studies behaviour of self-iterates of rational maps, defined over $K$, acting on the projective line over $K$. It is nowadays standard to extend this classical version of non-Archimedean dynamics to the dynamics on the Berkovich projective line, denoted $\PP^{1,an}$ (As the field $K$ underneath will remain constant throughout this article, we have suppressed it from the notation of the Berkovich projective line). For detailed discussion on Berkovich projective line and the action of rational maps on it, see \cite{berk}, \cite{brume}, \cite{benebook}, for a short introduction, see \cite{faber1}. In summary, Berkovich projective line over $K$ is a path connected, compact, Hausdorff space under the weak topology in which the classical projective line $\PP^{1}(K)$ embeds topologically as a dense subset (see \cite{brume}, \cite{benebook}). The action of any rational map on $\PP^{1}(K)$ extends continuously and uniquely to an action on the Berkovich projective line. As one studies action of self-iterates of rational maps on the Berkovich projective line, the simplest type of points one encounters are fixed points. To elaborate, a \emph{fixed point} of a rational map $\phi$ defined over $K$ is a point $\xi \in \PP^{1,an}$, such that $\phi(\xi)=\xi$. Any rational map $\phi$ defined over $K$ must have at least one fixed point in $\PP^{1}(K)$. Over a field, a rational map has only finitely many fixed points. However, on the Berkovich projective line the set of fixed points of a rational map can be infinite. Thus, we define \emph{the fixed locus} of a rational map $\phi$ to be the set of all fixed points of $\phi$ in $\PP^{1,an}$. The set of all fixed points of a continuous self-map on a Hausdorff space is closed. Thus, for any rational map $\phi$, the fixed locus of $\phi$ in $\PP^{1,an}$ is non-empty, closed and compact under the weak topology.

Consider the complete, algebraically closed, non-Archimedean field $K$ equipped with the non-trivial non-Archimedean absolute value $|\cdot|$. The set of all elements of $K$ with absolute value less than or equal to $1$ forms a local ring, denoted $\mathcal{O}_{K}$. The unique maximal ideal of $\mathcal{O}_{K}$ comprises of all elements of $\mathcal{O}_{K}$ with absolute value less than $1$, denoted $m_{K}$. The field $\mathcal{O}_{K}/m_{K}$ is called the residue field of $K$, denoted $\tilde{K}$. Consider a rational map $\phi$ defined over $K$. By multiplying the numerator and denominator of $\phi$ by a suitable rational function, we can assume that the numerator and denominator of $\phi$ has no non-constant common factor and all the coefficients of $\phi$ lie in $\mathcal{O}_{K}$. We will call this \emph{a normalized form} of the rational map $\phi$. Reducing the coefficients of $\phi$ modulo the maximal ideal $m_{K}$, one obtains a rational map $\tilde{\phi}$ defined over $\tilde{K}$ (note that we allow the map $\tilde{\phi}$ to be a constant, even the constant map $\infty$). This map $\tilde{\phi}$ is called the \emph{reduction} of $\phi$ or \emph{the residual map} of $\phi$. A normalized map $\phi$ is said to have a \emph{good reduction}, if degree of $\tilde{\phi}$ is the same as the degree of $\phi$. The map $\phi$ is said to be of \emph{potential good reduction} (in some of the literature, it is also referred as relative good reduction) if there exists a M\"{o}bius transformation $\sigma \in PGL_{2}(K)$ such that a normalized version of the conjugation $\sigma \circ \phi \circ \sigma^{-1}$ has good reduction. 

Let $\phi(z)$ be a rational map defined over an algebraically closed field $K$. Let $\alpha$ be a point in $\PP^{1}(K)$, the classical projective line over $K$. One can always find a M\"{o}bius transformation $\sigma \in PGL_{2}(K)$ such that both $\sigma(\alpha)$ and $\sigma(\phi(\alpha))$ lie in $K$. The point $\alpha$ is called a \emph{critical point} of $\phi$ if $\sigma(\alpha)$ is a root of the formal derivative $(\sigma \circ \phi \circ \sigma^{-1})'(z)$ in $K$. 

Properties of fixed points of a rational map on the Berkovich projective line have been studied by Rivera-Letelier (\cite{rl1}, \cite{rl2}, \cite{rl3}, \cite{rl4}), Rumely (\cite{rume1}, \cite{rume2}), Benedetto (\cite{bene1}, \cite{bene2}) among others. A rational map with potential good reduction has a unique type~II totally ramified fixed point (see Theorem \ref{tool4}, Lemma \ref{3.5}). The central purpose of this article is to show that the topological structure of the fixed lcous of a rational map with potential good reduction is largely governed by the local behaviour of the map around the totally ramified type~II fixed point. In particular, we discuss connectedness properties of the fixed locus of a rational map with potential good reduction. Apart from connectedness properties, the results in this article serve two purposes. Firstly, they provide more insight into the dynamics of rational maps acting on the Berkovich projective line. Secondly, they subtly highlight an interplay between the ramification locus and the fixed locus of a rational map. We now state the main results of this article. Firstly, we determine the number of connected components of the fixed locus (Theorem \ref{maintheorem}).

\begin{theorem}\label{1.1}
    Let $\phi$ be a rational map of degree $\geq 2$, defined over $K$, with potential good reduction. The number of connected components of the fixed locus of $\phi$ is $1 + $ the number of attracting fixed points of $\phi$.
\end{theorem}

This provides us with a bound of $(d+2)$ for the number of connected components of the fixed locus of a rational map of degree $d \geq 2$. Using Theorem \ref{1.1}, we also give a new proof of the main theorem of \cite{bene1} in Remark \ref{rem:bene_new_proof}.

Next we give equivalent criterions for the extreme cases of Theorem \ref{1.1} in Corollaries \ref{3.2} and \ref{3.11}.

\begin{corollary}\label{cor:intro_conn}
    Let $\phi$ be a degree $\geq 2$ rational map defined over $K$, with potential good reduction. Then the fixed locus of $\phi$ is connected if and only if the reduction of $\phi$ at the type~II totally ramified fixed point has no fixed critical point. 
\end{corollary}

\begin{corollary}\label{cor:intro_fin}
    Let $\phi$ be a degree $d \geq 2$ rational map with potentially good reduction. Then, The fixed locus of $\phi$ is a finite set if and only if every fixed point of the reduction of $\phi$ at the type~II totally ramified fixed point is critical. In such a case, the fixed locus of $\phi$ consists of the classical fixed points of $\phi$ and the type~II totally ramified fixed point of $\phi$.
\end{corollary}

We provide examples (Example \ref{3.12}), one for each degree, that shows the bound observed from Theorem \ref{1.1} is sharp. This class of examples also serves Corollary \ref{cor:intro_fin}. However, each of these examples have inseparable reduction at the Gauss point, which is the type~II totally ramified fixed point. In Example \ref{ex:sep_fin}, we provide a rational map which has separable reduction at the type~II totally ramified fixed point, with finitely many fixed points in $\PP^{1,an}$.   

Next, in Corollary \ref{3.6} and Proposition \ref{noid}, we characterize the entire fixed locus of a rational map $\phi$ with potential good reduction. The set $A_{\phi}$ is defined to be the set of all indifferent classical fixed points of $\phi$ and the totally ramified type~II fixed point in $\PP^{1,an}$. The set Hull$(A_{\phi})$ denotes the connected hull of $A_{\phi}$ i.e. the smallest connected subset of $\PP^{1.an}$ containing $A_{\phi}$.

\begin{proposition}\label{prop:conn_hull}
    Let $\phi$ be a rational map of degree $\geq 2$ with potential good reduction. Then, Hull$(A_{\phi})$ is contained in the fixed locus of $\phi$. If there is no point in Hull$(A_{\phi})$ at which the reduction of $\phi$ is the identity map, then the fixed locus of $\phi$ consists of Hull$(A_{\phi})$ and the attracting fixed points of $\phi$. 
\end{proposition}

Using Proposition \ref{prop:conn_hull}, we determine the entire fixed locus of a particular rational map in Example \ref{3.7}. 

We then extend the results above to the set of all periodic point of a rational map with potential good reduction.

In the next section, we introduce briefly the necessary definitions, notations and properties needed for this article. We also provide a collection of results with references, that will be required in the later section. In third section, we prove the main results of this article.

\section*{Acknowledgement}

The author would like to thank C. S. Rajan, Sabyasachi Mukherjee, Xander Faber, Juan Rivera-Letelier, Shubham Kumar for many helpful comments and discussions. The author would also like to thank Shrestha Pattanayak for her support.

\section{Prerequisites}

Our notations will roughly follow the notations of the book \cite{brume} by Baker and Rumely. For the construction and basic properties of Berkovich projective line $\PP^{1,an}$, see \cite{berk}, \cite{brume}, \cite{benebook}. The points of $\PP^{1,an}$ are classified as follows,

$(1)$ Type~I points (also called, \emph{rigid} or \emph{classical} points), which correspond to (evaluation semi-norm at) points of $\PP^{1}(K)$.

$(2)$ Type~II points which correspond to (sup norm on) closed disks in $K$ with radius lying in the value group, $|K^{\times}|$.

$(3)$ Type~III points which correspond to (sup norm on) closed disks in $K$ with radius not lying in the value group.

$(4)$ Type~IV points which correspond to (limit of sup norms on) decreasing sequences of closed disks whose intersection is empty.

The type~II point corresponding to the closed disk $D(0,1)$ of radius $1$ around $0$ is called the \emph{Gauss point}, denoted $\zeta_{Gauss}$. Any other type~II or III point that corresponds to a closed disk of radius $r$ around $a \in K$, is denoted as $\zeta_{a,r}$. The point $\zeta_{a,r}$ is said to have \emph{diameter} (Compare \cite[Page~11]{brume}) diam$(\zeta_{a,r})$ $= r$, as it is the supremum of distances between any pair of points on $D(a,r)$. The \emph{diameter} of a type~IV point is defined to be the limit of the diameters of a decreasing sequence of disks defining that type~IV point. The Berkovich projective line can be seen as an infinite tree, rooted at $\zeta_{Gauss}$ (See Chapter $2$, \cite{brume}). This tree branches at every type~II point in infinitely many \emph{directions}, which are in one-one correspondence with $\PP^{1}(\tilde{K})$. At any type~III point, it branches in only two directions. Type~I and IV points serve as endpoints of this tree (See picture at \cite[page~30]{brume}).    

\subsection{Topology on the Berkovich projective line}

Consider the connected (connected as a tree) components of the Berkovich projective line missing a point. The \emph{weak topology} (or, \emph{Berkovich topology}) (See \cite[Page~28]{brume}) is given on the Berkovich projective line by considering the set of all such connected components as a subbasis. Under the weak topology, the Berkovich projective line is a uniquely path-connected (i.e. any non-trivial loop in $\PP^{1,an}$ must involve retracing), compact, Hausdorff space (See \cite[Proposition~2.6~and~Lemma~2.10]{brume}). This topology restricts to the usual non-Archimedean topology on the type~I points. Moreover, under this topology the type~I points form a dense subset (See \cite[Lemma~2.9]{brume}). Let $\zeta$ be a point on the Berkovich projective line. Consider the connected components of $\PP^{1,an} \backslash \zeta$. These are called \emph{open Berkovich disks} (Compare \cite[Definition~2.25]{brume}) at $\zeta$. These are also considered as \emph{directions} at $\zeta$ (Compare \cite[Page~41]{brume}). Collection of all these directions at $\zeta$ forms the \emph{tangent space} (Compare \cite[Page~402]{brume}) at $\zeta$, denoted $T_{\zeta} \PP^{1,an}$. We denote a direction at a point on the Berkovich projective line using a vector, such as $\vec{v}$. If $\vec{v}$ is a direction at the point $\zeta \in \PP^{1,an}$, then the open Berkovich disk corresponding to it is denoted as $B_{\vec{v}}$. Let $\xi$ be another point on $\PP^{1,an}$ distinct from $\zeta$. Then the connected component (open Berkovich disk) of $\PP^{1,an} \backslash \zeta$ containing $\xi$ will be denoted as $B_{\xi}(\zeta)$. 

\subsection{Arcs and the big metric} 

Let $\zeta$ and $\xi$ be two points on $\PP^{1,an}$. The notation $[\zeta,\xi]$ (Compare \cite[Page~9]{brume}) stands for the unique closed path in $\PP^{1,an}$ starting at $\zeta$, ending at $\xi$, with no retracing. This is also called the \emph{arc} between $\zeta$ and $\xi$. The notations $[\zeta, \xi), (\zeta, \xi], (\zeta,\xi)$ correspond to the paths between $\zeta$ and $\xi$ with one or both endpoints removed, as designated by round brackets. 

The space $\PP^{1,an} \backslash \PP^{1}(K)$ is called the \emph{Berkovich hyperbolic space} (See \cite[definition~2.20]{brume}). On this space, one defines a metric called \emph{the big metric} (See \cite[Page~44~,~Equation~(2.23)]{brume}). Let $x$ and $y$ be two points in the Berkovich hyperbolic space. Denote the unique point lying on the intersection of the three arcs, $[x, \infty]$, $[y, \infty]$ and $[x,y]$ as $x \vee_{\infty} y$. Then \emph{the big metric} $\rho$ is defined to be,

\begin{equation}\label{bigmet}
    \rho(x,y) = 2 \log_{v}(\text{diam}(x \vee_{\infty} y)) - \log_{v}(\text{diam}(x)) - \log_{v}(\text{diam}(y)),
\end{equation}
where $v$ denotes the absolute value of the element of $K$ which normalizes $|\cdot|$ (one can choose this element to be a uniformizer of a discrete valuation ring topologically and algebraically embedded in $\mathcal{O}_{K}$). Under the big metric $\rho$, the type~I points lie at an infinite distance from any other type points. The topology induced by the big metric on the Berkovich hyperbolic space is called the \emph{strong topology} (See \cite[Section~2.7]{brume}). As the name suggests, this is a stronger topology than the restriction of the weak topology on the hyperbolic space. However, when one considers the Berkovich projective line as a projective limit of finite $\R$ trees, the projective limit topology generated by the restriction of strong topology on the finite subtrees of the Berkovich projective line, is the same as the Berkovich topology on $\PP^{1,an}$ (See \cite[Theorem~2.21]{brume}).

\subsection{Action of rational maps and multiplicity}

Action of any rational map, defined over $K$, on $\PP^{1}(K)$ extends continuously and uniquely to an action on the Berkovich projective line (See \cite[Section~2.3]{brume}). Under these actions, type of a point stay preserved. Let $\phi$ be a rational map that sends $\zeta \in \PP^{1,an}$ to $\phi(\zeta)$. Then, $\phi$ induces a map at level of tangent spaces, $T_{\zeta}\phi : T_{\zeta} \PP^{1,an} \rightarrow T_{\phi(\zeta)} \PP^{1,an}$. If $\zeta$ is a type~II fixed point of $\phi$, the map $T_{\zeta}\phi$ is called the \emph{reduction of} $\phi$ \emph{at} $\zeta$ or, the \emph{residual map of} $\phi$ \emph{at} $\zeta$.

There is a notion of multiplicity (See \cite[Definition~9.7]{brume}, \cite[Section~3.1]{faber1}) of a rational map $\phi$ at a point on the Berkovich projective line, which extends the usual notion of multiplicity of a function on $\PP^{1}(K)$. Let $\phi$ be a rational map of degree $d$. The \emph{multiplicity} (also called \emph{local degree} or \emph{ramification index}) of $\phi$ at a point $\zeta$ is denoted $m_{\phi}(\zeta)$. For every direction $\vec{v}$, there is also a notion of \emph{directional multiplicity} (See \cite[Page~265]{brume}, \cite[Section~3.2]{faber1}) of $\phi$ at $\zeta$ in the direction $\vec{v}$, denoted $m_{\phi}(\zeta, \vec{v})$. For every direction $\vec{w}$ at $\phi(\zeta)$, the following equality holds (See \cite[Theorem~9.22~,~Part~C]{brume}),

\begin{equation}\label{2.2}
	\sum_{\vec{v} \in (T_{\zeta}\phi)^{-1}(\vec{w})} m_{\phi}(\zeta, \vec{v}) = m_{\phi}(\zeta).
\end{equation} 

Moreover, as one would expect for a degree $d$ rational map, the following equality holds for arbitrary $\zeta \in \PP^{1,an}$ (See \cite[Theorem~9.8~,~Part~C]{brume}),

\begin{equation}\label{2.3}
	\sum_{\xi \in \phi^{-1}(\zeta)} m_{\phi}(\xi) = d.
\end{equation} 

Fix a degree $\geq 2$ rational map $\phi$ defined over $K$. Any point $\zeta \in \PP^{1,an}$ at which the multiplicity of $\phi$ is greater than $1$ is called a \emph{ramified point}. We reserve the notion of \emph{critical point} for type~I ramified points. The set of ramified points of $\phi$ in $\PP^{1,an}$ is called \emph{the ramification locus} of $\phi$ (See \cite[Definition~3.2]{faber1}). As the multiplicity function is upper semicontinuous with respect to the weak topology (See \cite[Proposition~9.28]{brume}), the ramification locus is a closed set. A point in $\PP^{1,an}$ is called \emph{totally ramified} if it has multiplicity equal to the degree of the rational map in a normalized form. Similarly, one defines the \emph{locus of total ramification} (See \cite[Definition~8.1]{faber1}). 

\subsection{Classification of fixed points}

Following Rivera-Letelier, we introduce the following classification of fixed points of a rational map $\phi$, which are \emph{not} of type~I (also called rigid or classical). A non-rigid fixed point of $\phi$ is called \emph{repelling} if the multiplicity of $\phi$ at that point is $2$ or higher. Otherwise, it is called an \emph{indifferent} fixed point.

On classical/rigid/type~I points, we have the usual classification of fixed points. The \emph{multiplier} $\lambda$ of $\phi$ at a fixed point $a \in K$ is defined to be $\lambda = \phi'(a)$, where $\phi'$ denote the derivative of $\phi$. The notion of multiplier is extended naturally to $\infty$ as $\infty$ can be moved to any other rigid point via a M\"{o}bius transformation. The rigid fixed points are classified as,

\begin{itemize}
    \item If $|\lambda|=0$, it is called \emph{superattracting} fixed point,
    \item If $0<|\lambda|<1$, it is called \emph{attracting} fixed point,
    \item If $|\lambda|=1$, it is called \emph{indifferent} fixed point,
    \item If $|\lambda|>1$, it is called \emph{repelling} fixed point.
\end{itemize}

\subsection{A collection of results}

With the notations and background set, we recall a few well-known results that will be needed in the next section. We omit most of the proofs in this section. The following theorem is due to Rivera-Letelier (\cite[Lemma~2.1]{rl1}, \cite[Proposition~9.41]{brume}).

\begin{theorem}\label{tool1}
    Let $\phi$ be a non-constant rational map defined over $K$. Let $\zeta$ be a point in $\PP^{1,an}$. Let $\vec{v}$ be a direction at $\zeta$ and $B_{\vec{v}}$ is the corresponding open Berkovich disk. Suppose the map induced by $\phi$ at the level of tangent space at $\zeta$, $T_{\zeta}\phi : T_{\zeta}\PP^{1,an} \rightarrow T_{\phi(\zeta)}\PP^{1,an}$ sends $\vec{v}$ to $\vec{w}$. Let $B_{\vec{w}}$ be the open Berkovich disk in the direction $\vec{w}$ at $\phi(\zeta)$. Then, either $\phi(B_{\vec{v}}) = B_{\vec{w}}$ or $\phi(B_{\vec{v}}) = \PP^{1,an}$.
\end{theorem}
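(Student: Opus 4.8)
The plan is to argue purely topologically, using that a non-constant rational map acts as a continuous, open map on $\PP^{1}_{an}$ (see \cite{brume}, \cite{faber1}). First I would record the structural facts I need about the open Berkovich disk $B_{\vec{v}}$: its topological boundary in $\PP^{1}_{an}$ is the single point $\{\zeta\}$, so its closure $\overline{B_{\vec{v}}}=B_{\vec{v}}\cup\{\zeta\}$ is compact. Since $\phi$ is continuous and $\PP^{1}_{an}$ is compact Hausdorff, $\phi(\overline{B_{\vec{v}}})=\phi(B_{\vec{v}})\cup\{\phi(\zeta)\}$ is compact, hence closed. Consequently $\overline{\phi(B_{\vec{v}})}\subseteq\phi(B_{\vec{v}})\cup\{\phi(\zeta)\}$, and therefore the boundary $\partial\,\phi(B_{\vec{v}})=\overline{\phi(B_{\vec{v}})}\setminus\phi(B_{\vec{v}})$ is contained in $\{\phi(\zeta)\}$. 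Because $B_{\vec{v}}$ is connected and $\phi$ is continuous and open, $\phi(B_{\vec{v}})$ is a connected open set whose boundary is either empty or exactly $\{\phi(\zeta)\}$.

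This immediately yields the dichotomy. If $\partial\,\phi(B_{\vec{v}})=\emptyset$, then $\phi(B_{\vec{v}})$ is a nonempty clopen subset of the connected space $\PP^{1}_{an}$, forcing $\phi(B_{\vec{v}})=\PP^{1}_{an}$. Otherwise $\partial\,\phi(B_{\vec{v}})=\{\phi(\zeta)\}$, so in particular $\phi(\zeta)\notin\phi(B_{\vec{v}})$ and $\phi(B_{\vec{v}})$ is a connected open subset of $\PP^{1}_{an}\setminus\{\phi(\zeta)\}$. Hence it lies inside a single connected component $B$ of $\PP^{1}_{an}\setminus\{\phi(\zeta)\}$, that is, inside a single open Berkovich disk at $\phi(\zeta)$. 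I would then show $\phi(B_{\vec{v}})=B$: since $\overline{\phi(B_{\vec{v}})}\cap B=\phi(B_{\vec{v}})$, the set $\phi(B_{\vec{v}})$ is closed in $B$; it is also open in $B$ and nonempty, so connectedness of $B$ gives the equality.

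It remains to identify $B$ with the prescribed disk $B_{\vec{w}}$. Here I would invoke the definition of the tangent map: $T_{\zeta}\phi(\vec{v})=\vec{w}$ means that for points $\eta$ on the arc emanating from $\zeta$ in the direction $\vec{v}$ and sufficiently close to $\zeta$, the image $\phi(\eta)$ lies on the arc leaving $\phi(\zeta)$ in the direction $\vec{w}$, i.e. $\phi(\eta)\in B_{\vec{w}}$. Such an $\eta$ lies in $B_{\vec{v}}$, so $\phi(\eta)\in\phi(B_{\vec{v}})=B$; thus $B\cap B_{\vec{w}}\neq\emptyset$, and since both are connected components of $\PP^{1}_{an}\setminus\{\phi(\zeta)\}$ they coincide, giving $\phi(B_{\vec{v}})=B_{\vec{w}}$.

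The main obstacle I anticipate is not in the topological skeleton but in two supporting points. First, justifying that $\phi$ is an open map on $\PP^{1}_{an}$, which I would take from the general theory in \cite{brume}, \cite{faber1} rather than reprove. Second, the identification step that a connected open set with one-point boundary must exhaust the ambient open disk; this relies on the uniquely path-connected tree structure of $\PP^{1}_{an}$ and on the characterization of the components of $\PP^{1}_{an}\setminus\{\phi(\zeta)\}$ as exactly the open Berkovich disks at that point. Care is also needed to state the tangent-map definition precisely enough that the final matching $B=B_{\vec{w}}$ is unambiguous.
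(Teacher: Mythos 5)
Your argument is correct. Note that the paper itself gives no proof of this statement; it is quoted as a known result of Rivera-Letelier with references to \cite{rl1} and \cite[Proposition~9.41]{brume}, and your topological argument (one-point boundary of $B_{\vec{v}}$, compactness of the image of the closure, openness of $\phi$, then a clopen/connectedness dichotomy and matching the component via the tangent map) is essentially the standard proof appearing in those sources. The only substantive input you defer to the literature is the openness of non-constant rational maps on $\PP^{1}_{an}$, which is indeed where the analytic content lies and is legitimate to cite.
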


In the above theorem, if $\phi(B_{\vec{v}}) = B_{\vec{w}}$, then $\vec{v}$ is called a \emph{good direction} at $\zeta$ (compare \cite[Page~1450]{kiwi1}). Otherwise, it is called a \emph{bad direction}.

The following result is due to Rivera-Letelier (\cite[Proposition~3.3]{rl1}, \cite[Corollary~9.25]{brume}).

\begin{theorem}\label{tool2}
    Let $\phi$ be a non-constant rational map, in normalized form, defined over $K$. Then $\phi$ has non-constant reduction if and only if $\zeta_{Gauss}$ is a fixed point of $\phi$. Identifying the tangent space at the Gauss point with $\PP^{1}(\tilde{K})$, we get,

    $(1)$ $m_{\phi}(\zeta_{Gauss}) = deg(\tilde{\phi})$, where $\tilde{\phi}$ is the reduction of $\phi$ with common factors of the denominator and numerator cancelled out.

    $(2)$ $T_{\zeta_{Gauss}}\phi (\vec{v}) = \tilde{\phi}(\vec{v})$, for every $\vec{v}$ in the tangent space of $\zeta_{Gauss}$ which is canonically identified with $\PP^{1}(\tilde{K})$.
    
    $(3)$ $m_{\phi} (\zeta_{Gauss}, \vec{v}) = m_{\vec{v}}(\tilde{\phi})$, for every $\vec{v}$ in $\PP^{1}(\tilde{K})$. Here, $m_{\vec{v}}(\tilde{\phi})$ denotes the multiplicity of $\tilde{\phi}$ at $\vec{v}$.
\end{theorem}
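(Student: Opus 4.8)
The plan is to reduce every assertion to a residue-level computation, using the description of $\zeta_{Gauss}$ as the Gauss seminorm on $\LL(z)$ and the fact that the directions at $\zeta_{Gauss}$ are parametrized by residue classes. Write $\phi = P/Q$ in the given normalized form, so that $P, Q \in \mathcal{O}_{\LL}[z]$ share no nonconstant factor and at least one coefficient is a unit; let $\tilde{P}, \tilde{Q} \in \tilde{\LL}[z]$ be their reductions, so that $\tilde{\phi} = \tilde{P}/\tilde{Q}$ after cancelling any common factor.

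First I would settle the equivalence. The image $\phi(\zeta_{Gauss})$ is the point whose seminorm sends a target coordinate function $g$ to the Gauss seminorm of $g \circ \phi$; taking $g = w - c$ for $c \in \LL$ exhibits $\phi(\zeta_{Gauss})$ explicitly as a closed disk whose diameter and center are read off from the Gauss norms of $P - cQ$ and $Q$. The content of this computation is that $\phi$ fixes $\zeta_{Gauss}$ exactly when it does not collapse the entire residue disk $D(0,1)$ into one residue class. Concretely, if $\tilde{P} = 0$, or $\tilde{Q} = 0$, or $\tilde{P}$ and $\tilde{Q}$ are proportional, then $\tilde{\phi}$ is constant (possibly the constant $\infty$) and $\phi(\zeta_{Gauss})$ lies strictly inside a single residue disk, whence $\phi(\zeta_{Gauss}) \neq \zeta_{Gauss}$; otherwise $\tilde{\phi}$ is a genuine nonconstant element of $\tilde{\LL}(z)$, the pushforward seminorm is again the Gauss seminorm, and $\phi(\zeta_{Gauss}) = \zeta_{Gauss}$. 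These three degenerate cases must be checked separately, and the bookkeeping is a little delicate but entirely elementary.

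Assuming now that $\tilde{\phi}$ is nonconstant, I would identify a direction $\vec{v}$ at $\zeta_{Gauss}$ with the residue class $\bar{v} \in \PP^{1}(\tilde{\LL})$ it determines, so that $B_{\vec{v}}$ is precisely the set of points reducing to $\bar{v}$. Part $(2)$ is then the assertion that reduction intertwines $\phi$ with $\tilde{\phi}$, and I would prove it by a direct expansion: for $a \in \mathcal{O}_{\LL}$ with $\tilde{Q}(\bar{a}) \neq 0$ and any $z = a + \epsilon$ with $|\epsilon|$ small, reducing $\phi(z) = P(z)/Q(z)$ modulo $m_{\LL}$ gives $\tilde{P}(\bar{a})/\tilde{Q}(\bar{a}) = \tilde{\phi}(\bar{a})$, so $T_{\zeta_{Gauss}}\phi(\vec{v}) = \vec{w}$ with $\bar{w} = \tilde{\phi}(\bar{v})$; the cases $\bar{v} = \infty$ or $\tilde{\phi}(\bar{v}) = \infty$ are handled by the same computation after a M\"{o}bius coordinate change. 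This is the conceptual core of the theorem, since once reduction is known to be equivariant, Parts $(1)$ and $(3)$ reduce to statements about the algebraic self-map $\tilde{\phi}$ of $\PP^{1}(\tilde{\LL})$.

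For Parts $(1)$ and $(3)$ I would compute directional multiplicities through the way residue disks map. Fix a direction $\vec{w}$ corresponding to $\bar{w} \in \PP^{1}(\tilde{\LL})$; by Part $(2)$ the preimage directions are exactly those $\vec{v}$ with $\tilde{\phi}(\bar{v}) = \bar{w}$, and I claim $m_{\phi}(\zeta_{Gauss}, \vec{v})$ equals the local multiplicity $m_{\bar{v}}(\tilde{\phi})$. This would follow by identifying $m_{\phi}(\zeta_{Gauss}, \vec{v})$ with the degree of the restricted map $B_{\vec{v}} \to B_{\vec{w}}$ and matching it, via the equivariance of Part $(2)$, with the ramification index of $\tilde{\phi}$ at $\bar{v}$. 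Summing over $\vec{v} \in (T_{\zeta_{Gauss}}\phi)^{-1}(\vec{w})$, using the residual degree identity $\sum_{\tilde{\phi}(\bar{v}) = \bar{w}} m_{\bar{v}}(\tilde{\phi}) = \deg \tilde{\phi}$ together with the identity $\sum_{\vec{v} \in (T_{\zeta_{Gauss}}\phi)^{-1}(\vec{w})} m_{\phi}(\zeta_{Gauss}, \vec{v}) = m_{\phi}(\zeta_{Gauss})$ recalled in Section 2, then yields Part $(1)$. I expect the main obstacle to be Part $(3)$: reconciling the abstract definition of the Berkovich directional multiplicity with the algebraic local degree of $\tilde{\phi}$ demands care precisely at the indeterminacy points of $\tilde{\phi}$, that is, the common zeros of $\tilde{P}$ and $\tilde{Q}$, where the corresponding residue disk can map onto all of $\PP^{1}_{an}$ rather than onto a single image disk. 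Controlling these bad directions, either ruling them out or accounting for their multiplicity via Theorem \ref{tool1}, is the delicate step on which the remaining computations depend.
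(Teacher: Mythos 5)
The paper does not actually prove Theorem \ref{tool2}: it is quoted from Rivera-Letelier and Baker--Rumely without argument, so there is no in-paper proof to measure your proposal against. On its own terms, your outline follows the standard route of the cited sources --- compute the pushforward of the Gauss seminorm via the Gauss norms of $P-cQ$ and $Q$ to get the fixed-point equivalence, identify directions at $\zeta_{Gauss}$ with residue classes, prove equivariance of the tangent map, and deduce $(1)$ from $(3)$ by summing directional multiplicities over a fiber --- and the equivalence together with the good-direction cases of $(2)$ and $(3)$ are handled correctly.

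The genuine gap sits exactly where you locate it, at the common zeros of $\tilde{P}$ and $\tilde{Q}$ (including the one at infinity when the leading coefficients degenerate), and you name it without closing it. For part $(2)$ your only mechanism is pointwise reduction of $P(z)/Q(z)$, which returns $0/0$ at an indeterminacy point $\bar{a}$; the value $\tilde{\phi}(\bar{a})$ exists only after cancelling the common factor, and no expansion at type $1$ points $z=a+\epsilon$ with $|\epsilon|$ small can detect it, because that residue disk maps onto all of $\PP^{1}_{an}$ and its type $1$ points land in many different residue classes --- the tangent direction is read off from points near the boundary $\zeta_{Gauss}$ (e.g.\ $\zeta_{a,r}$ with $r\to 1^{-}$), which is the opposite limit from the one you take, and a M\"{o}bius change of coordinates does not remove the indeterminacy. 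For part $(3)$ the identification of $m_{\phi}(\zeta_{Gauss},\vec{v})$ with the degree of the restricted map $B_{\vec{v}}\to B_{\vec{w}}$ is available only for good directions, and Theorem \ref{tool1}, which you propose to use to ``account for'' the bad ones, says only that $\phi(B_{\vec{v}})=\PP^{1}_{an}$ there and carries no multiplicity information. The missing ingredient is a characterization of the directional multiplicity that is insensitive to good versus bad --- the algebraic order-function identity behind \cite[Theorem~9.22]{brume}, or the metric characterization of Theorem \ref{tool3} applied to a short initial segment of the direction --- which reduces $(2)$ and $(3)$ simultaneously to the local behaviour of the cancelled reduction $\tilde{\phi}$ at $\bar{a}$. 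As written, your argument establishes $(2)$ and $(3)$ only in the good directions, i.e.\ it proves the theorem in full only when $\tilde{\phi}$ has no indeterminacy points, which is essentially the good reduction case.
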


The next theorem is due to Rivera-Letelier (\cite[Proposition~4.6]{rl2}) for $K=\C_{p}$, the field of $p$-adic complex numbers, and generalised by Baker and Rumely (\cite[Corollary~9.21]{brume}) for arbitrary $K$. 

\begin{theorem}\label{tool3}
    Let $\phi$ be a non-constant rational function defined over $K$. Let $a \in \PP^{1,an}$ be a point not of type~I. Let $\vec{v}$ be a tangent direction at $x$. Then there is an arc $[a,c']$ representing the tangent direction $\vec{v}$ such that $\phi([a,c']) = [\phi(a), \phi(c')]$ and for all $x,y \in [a,c']$, the following holds,
    $$\rho(\phi(x), \phi(y)) = m_{\phi}(a, \vec{v}) \rho(x,y).$$
\end{theorem}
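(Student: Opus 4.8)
The plan is to reduce the statement to a local computation in which $\phi$ becomes a power map, and to exploit the fact that along an arc the big metric $\rho$ is nothing but the absolute difference of logarithmic diameters. Concretely, reading \eqref{bigmet} for two points $x,y$ lying on a common ``vertical'' arc of nested disks (so that $x\vee_{\infty}y$ is whichever of $x,y$ has the larger diameter) gives $\rho(x,y)=|\log_v \mathrm{diam}(x)-\log_v \mathrm{diam}(y)|$. Hence the desired identity $\rho(\phi(x),\phi(y))=m_\phi(a,\vec v)\,\rho(x,y)$ is equivalent to the assertion that, along the arc $[a,c']$, the map $\phi$ scales the logarithmic diameter affinely with slope exactly $m_\phi(a,\vec v)$. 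Throughout I use that $\rho$ is invariant and the multiplicity functions are unchanged under pre- and post-composition by elements of $PGL_2(\LL)$.

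First I treat $a$ of type $2$. Conjugating by suitable Möbius transformations I may assume $a=\phi(a)=\zeta_{Gauss}$, and, after a further affine change of coordinate fixing $\zeta_{Gauss}$ on source and target, that both $\vec v$ and its image $\vec w=T_{\zeta_{Gauss}}\phi(\vec v)$ point toward $\tilde 0$ under the identification $T_{\zeta_{Gauss}}\PP^1_{an}\cong \PP^1(\tilde\LL)$. By Theorem \ref{tool2}, $\phi$ has non-constant reduction $\tilde\phi$, one has $\tilde\phi(\tilde 0)=\tilde 0$, and $m_\phi(\zeta_{Gauss},\vec v)=m_{\vec v}(\tilde\phi)=:m$ is the order of vanishing of $\tilde\phi$ at $\tilde 0$. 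Examining a normalized form of $\phi$ near $0$, this order-$m$ vanishing of the reduction forces $\phi$ to send $\zeta_{0,t}$ to $\zeta_{0,t^{m}}$ for all $t$ in a half-open interval $[s,1)$ with $s<1$ close to $1$. Taking $c'=\zeta_{0,s}$, the arc $[a,c']=\{\zeta_{0,t}: s\le t\le 1\}$ is carried bijectively onto $[\phi(a),\phi(c')]=\{\zeta_{0,t^m}: s\le t\le 1\}$ because $t\mapsto t^m$ is strictly monotone, and the diameter formula above gives $\rho(\phi(\zeta_{0,t_1}),\phi(\zeta_{0,t_2}))=m\,|\log_v t_1-\log_v t_2|=m\,\rho(\zeta_{0,t_1},\zeta_{0,t_2})$, which is the claim.

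To remove the restriction to type $2$ points, for $a$ of type $3$ or $4$ I translate and rescale so that $\vec v$ still points toward $0$, though I can no longer move $a$ to the Gauss point. I then argue by approximation: type $2$ points are dense on $(a,c']$; the directional multiplicity is constant along a sufficiently short arc in the fixed direction $\vec v$ (by upper semicontinuity of the multiplicity function together with the local tree structure at $a$); and the map $x\mapsto \rho(\phi(a),\phi(x))$ is continuous along the arc. Thus the affine scaling of the logarithmic diameter, already verified at the type $2$ points of the arc, extends to all of $[a,c']$ by letting the type $2$ points accumulate at $a$, with the same slope $m_\phi(a,\vec v)$.

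The crux is the step that pins the local expansion factor to be \emph{exactly} $m_\phi(a,\vec v)$, namely that $\phi$ sends $\zeta_{0,t}$ to a point of diameter precisely $t^{m}$ (up to a constant) on a whole interval of radii, with $m$ equal to the directional multiplicity rather than merely bounded by it. Converting the dynamical definition of the directional multiplicity (the local degree of $\phi$ on the open Berkovich disk $B_{\vec v}$, cf. Theorem \ref{tool1}) into this metric statement is best done through the Newton polygon of a normalized form of $\phi$ restricted to the annulus swept out by $\{\zeta_{0,t}\}$: its relevant slope simultaneously computes the number of preimages counted with multiplicity and the exponent governing the image diameter. Guaranteeing that the arc is short enough for this slope, equivalently the directional multiplicity, to remain constant is exactly where the hypothesis that $a$ is not of type $1$ is used, since only then does $a$ lie in the hyperbolic space on which $\rho$ is finite and the nested-disk picture near $a$ is available.
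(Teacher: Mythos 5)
The paper does not actually prove Theorem \ref{tool3}; it imports it verbatim from Rivera-Letelier \cite[Proposition~4.6]{rl2} and Baker--Rumely \cite[Corollary~9.21]{brume}, so there is no in-paper argument to compare against. Your proposal is the standard proof from those sources, and the type~$2$ case is handled correctly: after moving $a$ and $\phi(a)$ to $\zeta_{Gauss}$ and both $\vec v$ and $\vec w$ to $\tilde 0$, Theorem \ref{tool2} identifies $m_\phi(\zeta_{Gauss},\vec v)$ with the vanishing order $m$ of $\tilde\phi$ at $\tilde 0$, the Taylor/Laurent expansion on a thin annulus $s<|z|<1$ gives $\phi(\zeta_{0,t})=\zeta_{\phi(0),t^{m}}=\zeta_{0,t^{m}}$ for $t$ near $1$, and the identity $\rho(x,y)=|\log_v\mathrm{diam}(x)-\log_v\mathrm{diam}(y)|$ along a nested arc converts this into the stated scaling. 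That part is sound, including your remark that the bad-direction case must be run on an annulus rather than a disk.

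The genuine gap is in the extension to type $3$ and type $4$ base points. You justify the key claim --- that the directional multiplicity is constant on a sufficiently short initial segment of the arc --- by ``upper semicontinuity of the multiplicity function,'' but upper semicontinuity controls the point multiplicity $m_\phi(b)$, not the directional multiplicity $m_\phi(b,\vec u)$, and $m_\phi(a,\vec v)$ is in general strictly smaller than $m_\phi(a)$, so no inequality in the right direction follows. Worse, your approximation scheme needs $m_\phi(b_n,\vec u_n)=m_\phi(a,\vec v)$ where $b_n$ are type $2$ points accumulating at $a$ and $\vec u_n$ points back toward $a$; for a type $4$ point $a$ the nested disks have empty intersection, there is no center in $\LL$, and the Newton-polygon computation that pins the exponent in the type $2$ case is simply not available ``at $a$.'' The missing ingredient is the characterization of $m_\phi(a,\vec v)$ as the constant local degree of $\phi$ on a sufficiently small annulus (equivalently, the number of preimages, counted with multiplicity, of points near $\phi(a)$ lying in $B_{\vec v}$ near $a$); that is the content of \cite[Theorem~9.22]{brume} and is what actually ties the directional multiplicities at the interior type $2$ points of the arc to $m_\phi(a,\vec v)$. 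With that lemma in hand your limiting argument closes; without it, the type $3$/$4$ case is not proved.
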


The following well-known result is difficult to find in one place. So we give a partial proof of it here.

\begin{theorem}\label{tool4}
    Let $\phi$ be a degree $\geq 2$ rational map, in normalized form, defined over $K$. The following are equivalent,
    
    $(1)$ The map $\phi$ has good reduction.
    
    $(2)$ The Gauss point $\zeta_{Gauss}$ is totally ramified and fixed under $\phi$.

    $(3)$ The Gauss point $\zeta_{Gauss}$ is fixed and every direction at $\zeta_{Gauss}$ is a good direction.

    $(4)$ The Gauss point point is fixed and the multiplicity function is non-increasing on every arc of the form $[\zeta_{Gauss}, \xi]$, $\xi \in \PP^{1,an}$.
\end{theorem}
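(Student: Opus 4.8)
The plan is to prove the equivalence of the four characterizations of good reduction by establishing the cyclic chain of implications $(1) \Rightarrow (2) \Rightarrow (3) \Rightarrow (4) \Rightarrow (1)$, leaning heavily on Theorem \ref{tool2}, which translates statements about $\phi$ at $\zeta_{Gauss}$ into statements about the reduction $\tilde{\phi}$.

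First I would handle $(1) \Rightarrow (2)$. If $\phi$ has good reduction, then $\deg(\tilde{\phi}) = \deg(\phi) = d$, so by Theorem \ref{tool2} the reduction is non-constant, forcing $\zeta_{Gauss}$ to be fixed, and part $(1)$ of that theorem gives $m_{\phi}(\zeta_{Gauss}) = \deg(\tilde{\phi}) = d$, which is precisely total ramification at the Gauss point. For $(2) \Rightarrow (3)$, I would argue that total ramification at a fixed $\zeta_{Gauss}$ forces every direction to be good: since $m_{\phi}(\zeta_{Gauss}) = d$, the mass-balance formula $\sum_{\vec{v} \in (T_{\zeta}\phi)^{-1}(\vec{w})} m_{\phi}(\zeta_{Gauss}, \vec{v}) = m_{\phi}(\zeta_{Gauss})$ shows that the full degree $d$ is accounted for over each image direction $\vec{w}$. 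A direction $\vec{v}$ is bad precisely when $\phi(B_{\vec{v}}) = \PP^{1}_{an}$, which via Theorem \ref{tool1} and the surjectivity onto the whole line would force $B_{\vec{v}}$ to surject with full degree and hence absorb mass that cannot be compensated consistently across all image directions; I would make this rigorous by noting that good reduction means $\tilde{\phi}$ has the same degree, so no image direction $\vec{w}$ can have its total preimage multiplicity exceed $d$, and a bad direction would violate the directional mass count. The cleanest route here is to use part $(3)$ of Theorem \ref{tool2}, $m_{\phi}(\zeta_{Gauss}, \vec{v}) = m_{\vec{v}}(\tilde{\phi})$, together with the fact that a bad direction corresponds to a direction where $\tilde{\phi}$ drops degree.

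For $(3) \Rightarrow (4)$, the key tool is Theorem \ref{tool3}: along any arc emanating from $\zeta_{Gauss}$ in a direction $\vec{v}$, the map $\phi$ scales the big metric by the directional multiplicity $m_{\phi}(\zeta_{Gauss}, \vec{v})$, and when $\vec{v}$ is a good direction the image arc stays inside $B_{\vec{w}}$ without wrapping back. I would combine this with the general principle that directional multiplicity is non-increasing as one moves away from a point along an arc (a consequence of the local structure of the multiplicity function), so that once all directions at $\zeta_{Gauss}$ are good, the multiplicity function $m_{\phi}(\cdot)$ can only decrease along each arc $[\zeta_{Gauss}, \xi]$. The implication $(4) \Rightarrow (1)$ then closes the loop: if the multiplicity is non-increasing on every arc from the fixed Gauss point, then $m_{\phi}(\zeta_{Gauss})$ is the maximum value of the multiplicity function, and since the maximum multiplicity of a degree $d$ map equals $d$ (attained at totally ramified points, which must exist), we get $m_{\phi}(\zeta_{Gauss}) = d = \deg(\tilde{\phi})$ by Theorem \ref{tool2}, i.e.\ good reduction.

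The main obstacle I expect is making the directional bad/good dichotomy interact cleanly with the multiplicity bookkeeping in $(2) \Rightarrow (3)$ and $(3) \Rightarrow (4)$: specifically, the claim that the multiplicity function is non-increasing along arcs out of $\zeta_{Gauss}$ under hypothesis $(3)$ is the delicate point, since a priori a bad direction further down an arc could cause the multiplicity to spike. I would need to argue that good reduction (equivalently, all good directions at the Gauss point) propagates outward — that no point strictly inside a good Berkovich disk $B_{\vec{v}}$ can have multiplicity exceeding the directional multiplicity at the Gauss point. This is exactly where the injectivity of the degree count forces $\phi$ to behave like a degree-preserving map on each disk, and I expect the partial proof the authors promise will supply precisely this monotonicity argument while citing the literature for the remaining routine equivalences.
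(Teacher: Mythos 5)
Your chain $(1)\Rightarrow(2)\Rightarrow(3)\Rightarrow(4)\Rightarrow(1)$ is a reasonable skeleton, and $(1)\Rightarrow(2)$ via Theorem \ref{tool2} is fine, but two of the remaining steps have genuine gaps. For $(4)\Rightarrow(1)$ you argue that the maximum of the multiplicity function is attained at $\zeta_{Gauss}$ and that this maximum must equal $d$ because ``totally ramified points must exist.'' That last claim is false for a general rational map: a degree $d$ map all of whose critical points are simple and which does not have potential good reduction has empty locus of total ramification (this is why Theorem \ref{tool5} carries the potential-good-reduction hypothesis), so the maximum of $m_{\phi}$ over $\PP^{1}_{an}$ can be strictly less than $d$. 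The paper avoids this by a direct argument: if $\zeta_{Gauss}$ is not totally ramified it has a second preimage $\beta$, the image of the arc $[\zeta_{Gauss},\beta]$ is a loop at $\zeta_{Gauss}$ in the tree $\PP^{1}_{an}$ and hence must backtrack at some $\alpha\in(\zeta_{Gauss},\beta)$, where two distinct directions are identified and the multiplicity strictly exceeds the directional multiplicity toward $\zeta_{Gauss}$, contradicting $(4)$.

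The second gap is $(3)\Rightarrow(4)$, where you invoke ``the general principle that directional multiplicity is non-increasing as one moves away from a point along an arc.'' There is no such general principle: monotonicity of the multiplicity along arcs out of a fixed point at which all directions are good is exactly the content of the equivalence $(3)\iff(4)$ (Baker--Rumely, Theorem 9.42, which is what the paper cites for this step), and in general the multiplicity does spike at interior points of arcs --- that is precisely the backtracking phenomenon used above. You correctly flag this as the delicate point but supply no argument, so as written the step is circular. Finally, your $(2)\Rightarrow(3)$ can be made much cleaner and your version does not quite close: total ramification at the fixed Gauss point forces $\phi^{-1}(\zeta_{Gauss})=\{\zeta_{Gauss}\}$ by the degree formula, so no disk $B_{\vec{v}}$ contains a preimage of $\zeta_{Gauss}$, hence $\phi(B_{\vec{v}})\neq\PP^{1}_{an}$ and every direction is good by Theorem \ref{tool1}; your ``mass-balance'' discussion gestures at this, but the assertion that a bad direction is one where $\tilde{\phi}$ drops degree is not correct as stated.
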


\begin{proof}
    $(1) \iff (2)$ This is due to Rivera-Letelier (\cite[Theorem~4]{rl4}, \cite[Proposition~10.105]{brume}).

    $(2) \implies (3)$ The Gauss point is totally ramified. Hence, image of the open Berkovich disk $B_{\vec{v}}$ at any direction $\vec{v}$ at $\zeta_{Gauss}$ does not contain $\zeta_{Gauss}$. Hence, by Theorem \ref{tool1}, every direction at $\zeta_{Gauss}$ is a good direction.

    $(3) \iff (4)$ This result is due to Baker and Rumely (\cite[Theorem~9.42]{brume}). 

    $(4) \implies (2)$ We prove it by contradiction. Suppose the Gauss point $\zeta_{Gauss}$ is not totally ramified. Let $\beta \neq \zeta_{Gauss}$ be another preimage of $\zeta_{Gauss}$. Consider the arc $[\zeta_{Gauss}, \beta]$. Image of this arc is a loop at $\zeta_{Gauss}$. As $\PP^{1,an}$ is a tree, there is a type~II point $\alpha$ in $(\zeta_{Gauss}, \beta)$ where the image of $\phi$ backtracks/retraces. In other words, under the map $\phi$ the two distinct directions at $\alpha$ containing $\zeta_{Gauss}$ and $\beta$ maps to the same direction. Thus the multiplicity of $\phi$ at $\alpha$ is strictly greater than the directional multiplicity at $\alpha$ in the direction containing $\zeta_{Gauss}$. So, the multiplicity function makes a jump at $\alpha$ on the arc $[\zeta_{Gauss}, \beta]$. We arrive at a contradiction.
\end{proof}

The following theorem is due to Faber (\cite[Theorem~8.2]{faber1}).

\begin{theorem}\label{tool5}
    Let $\phi$ be a degree $\geq 2$ rational map, in normalized form, defined over $K$. If $\phi$ has potential good reduction, then the locus of total ramification of $\phi$ is non-empty, closed, connected set. Moreover, the ramification locus of $\phi$ is also connected. 
\end{theorem}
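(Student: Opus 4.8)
The plan is to reduce to the good reduction case and then exploit the monotonicity of the multiplicity function along arcs emanating from the unique totally ramified fixed point. Since $\phi$ has potential good reduction, there is a M\"obius transformation $\sigma \in PGL_{2}(\LL)$ such that $\psi := \sigma \circ \phi \circ \sigma^{-1}$ has good reduction. Because $\sigma$ acts as a homeomorphism of $\PP^{1}_{an}$ that preserves the tree structure and has multiplicity one everywhere, conjugation by $\sigma$ carries the ramification locus and the locus of total ramification of $\phi$ homeomorphically onto those of $\psi$, preserving all multiplicities. Hence it suffices to prove the statement for $\psi$, and so I may assume from the outset that $\phi$ itself has good reduction. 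By Theorem \ref{tool4}, this means that $\zeta_{Gauss}$ is fixed and totally ramified (so $m_{\phi}(\zeta_{Gauss}) = d$), and moreover that the multiplicity function $m_{\phi}$ is non-increasing on every arc of the form $[\zeta_{Gauss}, \xi]$.

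Non-emptiness and closedness are then immediate. The point $\zeta_{Gauss}$ lies in the locus of total ramification, so it is non-empty; and since $m_{\phi}$ is upper semicontinuous and bounded above by $d$, the set $\{m_{\phi} = d\} = \{m_{\phi} \geq d\}$ is closed, as is the ramification locus $\{m_{\phi} \geq 2\}$.

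For connectedness, I would show that both loci are arc-connected by joining each of their points to $\zeta_{Gauss}$ through an arc lying entirely inside the locus in question. Let $x$ be any point in the locus of total ramification, so $m_{\phi}(x) = d$. Along the arc $[\zeta_{Gauss}, x]$ the function $m_{\phi}$ is non-increasing, starts at $d$, ends at $d$, and never exceeds $d$; monotonicity then forces $m_{\phi} \equiv d$ on the whole arc. Thus $[\zeta_{Gauss}, x]$ lies in the locus of total ramification, which is therefore arc-connected and in particular connected. The same mechanism handles the ramification locus: if $x$ is ramified, i.e.\ $m_{\phi}(x) \geq 2$, then since $m_{\phi}$ is non-increasing along $[\zeta_{Gauss}, x]$ and is already at least $2$ at the endpoint $x$, it is at least $2$ at every point of the arc; hence $[\zeta_{Gauss}, x]$ lies in the ramification locus, proving it connected as well.

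The only genuinely delicate input is the monotonicity of the multiplicity along arcs based at the totally ramified fixed point, which is exactly the content of the equivalence $(3) \iff (4)$ in Theorem \ref{tool4}; granting that, the remainder is a short monotonicity observation. The one point requiring minor care is the reduction step, where I would verify that conjugation by a degree-one map preserves multiplicities and the arc (tree) structure, so that connectedness of the loci for $\psi$ transfers back to $\phi$.
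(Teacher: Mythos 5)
Your proposal is correct. Note, however, that the paper does not prove this statement at all: it is quoted as Theorem~8.2 of Faber's paper on the Berkovich ramification locus, and the author explicitly omits the proof. So there is nothing in the paper to compare against line by line; what you have produced is a self-contained derivation from Theorem~\ref{tool4}, and it holds up. The reduction to the good reduction case is legitimate because a M\"obius map has multiplicity one everywhere, so conjugation transports both loci homeomorphically and preserves $m_\phi$. Non-emptiness and closedness follow as you say from $m_\phi(\zeta_{Gauss})=d$ and upper semicontinuity of the multiplicity function (which the paper records via \cite[Proposition~9.28]{brume}). The heart of the argument is the monotonicity of $m_\phi$ along arcs $[\zeta_{Gauss},\xi]$, which is exactly item $(4)$ of Theorem~\ref{tool4}; granting that, your two observations --- a non-increasing function equal to $d$ at both endpoints is constantly $d$, and a non-increasing function that is at least $2$ at the far endpoint is at least $2$ throughout --- show both loci are star-shaped about $\zeta_{Gauss}$, hence connected. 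This is consistent with how the paper itself uses the monotonicity statement elsewhere (e.g., in the converse direction of Theorem~\ref{3.2}). The one thing your write-up gains over a bare citation is that it makes visible the stronger structural fact that both loci are arc-connected through $\zeta_{Gauss}$, which is implicitly used later in Lemma~\ref{3.9}.
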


The following theorem is due to Rivera-Letelier (\cite[Lemmas~5.3~and~5.4]{rl2}, \cite[Lemma~10.80]{brume}). In \cite[Lemma~10.80]{brume}, the field $K$ is assumed to have characteristic $0$, but the proof works perfectly fine for finite characteristic as well.

\begin{theorem}\label{tool7}
    Let $\phi$ be a rational function of degree $\geq 2$ defined over $K$. If $x$ is a fixed point of $\phi$ of type~III or IV, then, 

    $(1)$ $x$ is an indifferent fixed point.

    $(2)$ $(T_{x}\phi)(\vec{v}) = \vec{v}$, for every $\vec{v} \in T_{x}\PP^{1,an}$.
\end{theorem}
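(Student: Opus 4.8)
The goal is to prove Theorem \ref{tool7}: if $x$ is a type $3$ or $4$ fixed point of a degree $\geq 2$ rational map $\phi$, then $x$ is indifferent and the tangent map $T_x\phi$ fixes every direction.

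The plan is to exploit the fact that type $3$ and type $4$ points have very constrained tangent spaces. Recall that a type $3$ point branches in exactly two directions, and a type $4$ point is an endpoint of the tree, so its tangent space consists of a single direction. This structural rigidity will do most of the work.

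First I would handle the multiplicity/indifference claim (1). I would invoke the fundamental identity $\sum_{\xi \in \phi^{-1}(x)} m_\phi(\xi) = d$ together with the general fact that on the Berkovich line, repelling behaviour at non-type-$1$ points is detected by multiplicity $\geq 2$. The key point is that at a type $3$ or type $4$ point the local action of $\phi$ cannot be expanding: using Theorem \ref{tool3}, along an arc $[x,c']$ in any direction $\vec v$ the map scales the big metric $\rho$ by the factor $m_\phi(x,\vec v)$. If $x$ were repelling, i.e. $m_\phi(x) \geq 2$, then by the directional-multiplicity identity $\sum_{\vec v \in (T_x\phi)^{-1}(\vec w)} m_\phi(x,\vec v) = m_\phi(x)$ some direction would have multiplicity $\geq 2$ and hence the map would strictly expand $\rho$ there. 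For a type $3$ point, since there are only two directions and the tree locally looks like an arc through $x$, a strict expansion combined with $\phi(x)=x$ forces the point to be pushed off itself along the arc, contradicting fixedness; I would make this precise by noting that an expanding fixed point of multiplicity $\geq 2$ on a locally one-dimensional piece cannot remain fixed unless it is an endpoint behaving rigidly. For type $4$, the single-direction tangent space forces $m_\phi(x,\vec v) = m_\phi(x)$ for the unique direction, and I would argue the expansion factor must be $1$ by a limiting argument over the defining nested sequence of disks, yielding multiplicity $1$.

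For claim (2), that $T_x\phi$ fixes every direction, the type $4$ case is immediate since the tangent space is a singleton. For the type $3$ case, there are exactly two directions $\vec v_1, \vec v_2$ at $x$, and $T_x\phi$ permutes a two-element set while fixing $x$. Either $T_x\phi$ fixes both directions or it swaps them. I would rule out the swap: a swap would mean $\phi$ reverses orientation along the arc through $x$, but combined with $\phi(x)=x$ and Theorem \ref{tool3} (which gives an arc $[x,c']$ in direction $\vec v_1$ mapped isometrically-up-to-scaling onto an arc in direction $T_x\phi(\vec v_1) = \vec v_2$) this produces a contradiction with the tree structure, since the image arc would have to both leave $x$ in direction $\vec v_2$ and coincide near $x$ with the folding behaviour, forcing a retracing incompatible with $x$ being a type $3$ point of multiplicity $1$. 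Hence both directions are fixed.

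The main obstacle I expect is the type $3$ expansion-versus-fixedness argument in part (1): one must carefully translate ``multiplicity $\geq 2$'' into strict $\rho$-expansion via Theorem \ref{tool3} and then derive a genuine contradiction with $\phi(x) = x$, rather than merely an intuitive one. The subtlety is that a fixed point can have multiplicity $\geq 2$ when it is a branch point (e.g. the Gauss point under good reduction is totally ramified and fixed), so the argument must genuinely use that a type $3$ point has only two directions and no room to absorb ramification while staying fixed. Making the folding/retracing contradiction rigorous, rather than hand-waving about the tree, is where the real care is needed.
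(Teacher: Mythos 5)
The paper does not prove Theorem \ref{tool7}; it cites Rivera-Letelier and \cite[Lemma~10.80]{brume}, so your proposal can only be judged on its own merits, and as it stands it has a genuine gap at its core. The essential ingredient in the standard proof of the type $3$ case is \emph{arithmetic}, not topological: near a type $3$ point $x=\zeta_{0,r}$ with $r\notin|\LL^{\times}|$ the map has a local normal form $z\mapsto cz^{\pm m}(1+o(1))$ on an annulus, so the image point has diameter $|c|\,r^{\pm m}$; the fixed-point equation $|c|\,r^{\pm m}=r$ forces $r^{m\mp1}\in|\LL^{\times}|$, and since the value group of an algebraically closed field is divisible this puts $r$ in $|\LL^{\times}|$ unless $m=1$ and the sign is $+$ (directions fixed). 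Your proposed route --- deducing a contradiction from ``strict $\rho$-expansion at a fixed point on a locally one-dimensional piece'' and from ``folding/retracing'' in the swapped case --- cannot be completed, because the local metric and tree-theoretic picture you are trying to forbid actually occurs: $\zeta_{Gauss}$ under $z\mapsto z^{2}$ is a fixed point that expands $\rho$ by a factor of $2$ along the arc $[0,\infty]$ in both directions, and $z\mapsto 1/z$ fixes $\zeta_{Gauss}$ while swapping the two directions along that same arc with no retracing (an orientation-reversing isometry of an arc fixing its midpoint is perfectly consistent with unique path-connectedness). Nothing in the big metric or the local tree structure distinguishes these legitimate type $2$ configurations from the type $3$ configurations you want to exclude; only the value-group condition does. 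You sense this difficulty yourself in your final paragraph, but you do not supply the missing ingredient.

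The type $4$ part of claim (1) is likewise left as a placeholder (``a limiting argument over the defining nested sequence of disks''), and here too the expansion alone yields no contradiction. A workable argument is different in flavour: if $m_{\phi}(x)\geq 2$ then for type $2$ points $y_{n}\to x$ on the arc into $x$, the Berkovich disks $D_{n}$ containing $x$ with boundary $y_{n}$ map onto disks with degree $\geq 2$, so by Riemann--Hurwitz for disks each $D_{n}$ contains one of the finitely many type $1$ critical points of $\phi$; since $\bigcap_{n}\overline{D_{n}}=\{x\}$, some critical point would have to equal $x$, contradicting that $x$ is of type $4$. Only your part (2) for type $4$ (a singleton tangent space is necessarily fixed) is complete as written. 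I would recommend either importing the annulus normal form and the divisibility of $|\LL^{\times}|$ explicitly, or simply citing \cite[Lemma~10.80]{brume} as the paper does.
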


The following theorem is also due to Rivera-Letelier (\cite{rl3}, \cite[Theorem~10.85]{brume}), where the field $K$ is assumed to have characteristic zero but the proof extends directly for arbitrary characteristic.

\begin{theorem}\label{tool6}
    Let $\phi$ be a non-constant rational map defined over $K$. Let $X \subseteq \PP^{1,an}$ be a non-empty, connected, closed set under the weak topology such that $\phi(X) \subseteq X$. Then $X$ contains a point fixed by $\phi$. 
\end{theorem}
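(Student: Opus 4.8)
The plan is to prove this as a purely topological fixed‑point statement: the only facts we use are that $\phi$ is continuous with $\phi(X)\subseteq X$, and that $\PP^{1}_{an}$ is a tree. Being non‑empty, closed and connected in the uniquely path‑connected space $\PP^{1}_{an}$, the set $X$ is itself a compact, uniquely path‑connected subtree, and it inherits the realization of $\PP^{1}_{an}$ as an inverse limit of finite $\R$‑trees equipped with continuous nearest‑point retractions (the analogue for $X$ of \cite[Theorem~2.21]{brume}). For a finite subtree $T\subseteq X$ — the convex hull in $X$ of finitely many of its points — write $r_{T}:\PP^{1}_{an}\to T$ for the retraction sending $x$ to the unique point at which the arc $[x,t]$ first meets $T$ (independent of $t\in T$ by unique path‑connectivity). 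These retractions are continuous for the weak topology, and the finite subtrees of $X$, ordered by inclusion, form a directed set. The idea is to find exact fixed points on finite subtrees and then pass to a limit.

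First I would fix a finite subtree $T\subseteq X$. Since $\phi(X)\subseteq X$, the composite $f_{T}:=r_{T}\circ(\phi|_{T}):T\to T$ is a continuous self‑map of the finite tree $T$. A finite $\R$‑tree is a compact, contractible dendrite and hence has the fixed‑point property (for instance its Lefschetz number equals $1$), so there is a point $p_{T}\in T$ with
$$ r_{T}\big(\phi(p_{T})\big)=p_{T}. $$
Thus each finite subtree carries an ``approximate fixed point'' $p_{T}$, exact up to the error introduced by retracting $\phi(p_{T})$ back onto $T$.

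Next I would extract a genuine fixed point by compactness. The net $(p_{T})$, indexed by the finite subtrees of $X$ directed by inclusion, lies in the compact space $X$, so some subnet converges to a point $p\in X$; I claim $\phi(p)=p$. Suppose not. Since $p,\phi(p)\in X$ and $X$ is connected, the arc $[p,\phi(p)]$ lies in $X$ and has an interior point $\zeta$, which we may take of type $2$. Removing $\zeta$ separates $p$ from $\phi(p)$ into two disjoint open Berkovich disks $B_{p}\ni p$ and $B_{\phi(p)}\ni\phi(p)$. By continuity of $\phi$ there is an open neighbourhood $U$ of $p$ with $U\subseteq B_{p}$ and $\phi(U)\subseteq B_{\phi(p)}$. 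Choosing $T$ in the subnet with $\zeta\in T$ and $p_{T}\in U$ (possible since the subtrees of $X$ containing $\zeta$ are cofinal and $p_{T}\to p$), we get $p_{T}\in B_{p}$ while $\phi(p_{T})\in B_{\phi(p)}$. Because $\zeta\in T$ is the unique boundary point of $B_{\phi(p)}$, every arc from a point of $B_{\phi(p)}$ to a point of $T$ outside $B_{\phi(p)}$ must pass through $\zeta$; hence $r_{T}(\phi(p_{T}))\in B_{\phi(p)}\cup\{\zeta\}$, a set disjoint from $B_{p}$. This contradicts $p_{T}=r_{T}(\phi(p_{T}))\in B_{p}$, so $\phi(p)=p$ and $p\in X$ is the desired fixed point.

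The step I expect to be the main obstacle is this last limiting argument, where one must guarantee that the retractions interact correctly with the weak topology: concretely, that an interior point $\zeta$ of $[p,\phi(p)]$ really separates $p$ and $\phi(p)$ into distinct open disks, and that once $\zeta$ lies in the finite tree $T$ the retraction $r_{T}$ cannot carry a point of $B_{\phi(p)}$ back across $\zeta$ into $B_{p}$. Both facts rest entirely on the tree structure of $\PP^{1}_{an}$ and on the compatibility of the nearest‑point retractions with the weak topology recorded in the inverse‑limit description; the remaining ingredients (finite‑tree fixed points and compactness of $X$) are elementary.
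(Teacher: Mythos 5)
The paper does not actually prove this statement: it is quoted from Rivera-Letelier, with \cite[Theorem~10.85]{brume} given as the reference, so there is no internal proof to compare against. Judged on its own, your argument is correct, and it is essentially the standard proof of this result (retract $\phi$ onto an exhausting net of finite subtrees, use the fixed-point property of finite trees, and pass to a limit by compactness); notably, it uses only continuity of $\phi$ and the tree structure of $\PP^{1}_{an}$, not rationality. The separation argument at the end is sound: since $X$ is closed and connected it contains $[p,\phi(p)]$, hence contains the type $2$ point $\zeta$, the finite subtrees containing $\zeta$ are cofinal, and once $\zeta\in T$ the first-point retraction of anything in $B_{\phi(p)}$ lands in $B_{\phi(p)}\cup\{\zeta\}$, which is disjoint from $B_{p}$. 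Two points would need a sentence of justification in a full writeup, though both are standard: (i) continuity, for the weak topology, of the retraction $r_{T}$ onto a finite subtree whose endpoints may be of type $1$ or $4$ (the inverse-limit description in \cite[Theorem~2.21]{brume} is stated for subtrees of the hyperbolic space, so the extension to trees with classical endpoints should be checked, e.g.\ by noting that $r_{T}^{-1}$ of a basic open subset of $T$ is a union of connected components of complements of points); and (ii) the fixed-point property of a finite topological tree, which requires knowing that arcs in $\PP^{1}_{an}$, including those ending at type $1$ points, are homeomorphic to real intervals in the subspace topology (\cite[Lemma~2.10]{brume}). Neither is a gap in substance.
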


\section{Connected components of Berkovich fixed locus}

Let $K$ be a complete, algebraically closed, non-Archimedean field equipped with a non-trivial absolute value. Let $\PP^{1,an}$ be the Berkovich projective line over $K$. Let $\PP^{1}(K)$ be the set of rigid/classical/type~I points in $\PP^{1,an}$. 

Let $[\zeta, \xi]$ be an arc in $\PP^{1,an}$. We say a rational map $\phi$ \emph{backtracks} on the arc $[\zeta, \xi]$ if there is a point $\eta \in (\zeta, \xi)$ such that $T_{\eta}\phi$ maps the direction at $\eta$ containing $\zeta$ and the direction at $\eta$ containing $\xi$ to the same direction at $\phi(\eta)$. In such a scenario, it follows from Equation \eqref{2.2} that the multiplicity of $\phi$ at $\eta$ is strictly greater than multiplicity of points on $[\zeta, \xi]$ which are sufficiently close to $\eta$ under the big metric.

The following lemma is a particular case of Lemma $9.38$ of \cite[Page~277]{brume}.

\begin{lemma}\label{bijlemma}
    Let $\phi$ be a rational map of degree $\geq 2$ defined over $K$, with potential good reduction. Let $\zeta$ be a type~II totally ramified fixed point of $\phi$. For every point $\xi \in \PP^{1,an}$, $\phi$ maps the arc $[\zeta, \xi]$ bijectively onto the arc $[\zeta, \phi(\xi)]$.  
\end{lemma}

\begin{proof}
    As continuous image of a connected set is connected, $[\zeta, \phi(\xi)] \subseteq \phi([\zeta, \xi])$. As $\zeta$ is totally ramified and fixed, from Theorem \ref{tool4} there is no backtracking for $\phi$ on $[\zeta,\xi]$. A point $\eta$ of $\phi([\zeta, \xi])$ is called an \emph{endpoint} (\emph{leaf}) of $\phi([\zeta, \xi])$ if only one open Berkovich disk with boundary point $\eta$ intersects $\phi([\zeta, \xi])$. If $\phi([\zeta, \xi])$ has an endpoint other than $\zeta$ and $\phi(\xi)$, then $\phi$ must backtrack on the arc $[\zeta, \xi]$. As $[\zeta, \xi]$ is a closed arc, $\phi([\zeta, \xi])$ must be closed too. Hence, $\phi([\zeta, \xi])$ is a closed arc with end points $\zeta$ and $\phi(\xi)$. In other words, $\phi([\zeta, \xi])=[\zeta, \xi]$. Moreover, as $\phi$ does not backtrack on the arc $[\zeta, \xi]$, $\phi$ must be injective on the arc $[\zeta, \xi]$. The lemma is proved.
    \end{proof}

The following lemma is due to Rivera-Letelier \cite[Theorem~4]{rl4}, \cite[Theorem~10.105]{brume}. Nevertheless, we provide a proof of it here.

\begin{lemma}\label{3.5}
    Let $\phi$ be a rational map of degree $\geq 2$ defined over $K$. The map $\phi$ can have at most one type~II totally ramified fixed point. Moreover, $\phi$ has potential good reduction if and only if $\phi$ has a type~II totally ramified fixed point.
\end{lemma}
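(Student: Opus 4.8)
The plan is to prove the two assertions separately, treating the equivalence first and reserving most of the effort for uniqueness. For the equivalence I would use that $PGL_{2}(\LL)$ acts transitively on type $2$ points (the affine map $z \mapsto (z-a)/c$ with $|c| = r$ carries $\zeta_{a,r}$ to $\zeta_{Gauss}$) together with the invariance of the multiplicity function, of being fixed, and of the degree under conjugation by a M\"obius transformation. If $\phi$ has potential good reduction, choose $\sigma$ with $\sigma \circ \phi \circ \sigma^{-1}$ of good reduction; by $(1) \iff (2)$ of Theorem \ref{tool4} the Gauss point is a totally ramified fixed point of the conjugate, and pulling back by $\sigma^{-1}$ produces a type $2$ totally ramified fixed point of $\phi$. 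Conversely, given a type $2$ totally ramified fixed point $\xi$, pick $\sigma$ with $\sigma(\xi) = \zeta_{Gauss}$; then $\zeta_{Gauss}$ is totally ramified and fixed for $\sigma \circ \phi \circ \sigma^{-1}$, so that conjugate has good reduction by Theorem \ref{tool4} and $\phi$ has potential good reduction.

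For uniqueness I would argue by contradiction, assuming $\phi$ has two distinct type $2$ totally ramified fixed points. Conjugating one of them to $\zeta_{Gauss}$ as above, I may assume $\phi$ has good reduction and that there is a second type $2$ totally ramified fixed point $\xi \neq \zeta_{Gauss}$. Since $\phi$ has (potential) good reduction, Theorem \ref{tool5} shows its locus of total ramification is connected; as $\PP^{1}_{an}$ is uniquely path-connected, a connected set containing $\zeta_{Gauss}$ and $\xi$ must contain the whole arc $[\zeta_{Gauss}, \xi]$, so $m_{\phi} \equiv d$ along this arc.

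The heart of the argument is to show that $\phi$ expands the big metric by the factor $d$ all along $[\zeta_{Gauss}, \xi]$. Fix $\alpha$ on the arc and let $\vec{v}$ be the direction at $\alpha$ pointing toward $\zeta_{Gauss}$. Because $m_{\phi}(\alpha) = d$, the point $\alpha$ is the unique preimage of $\phi(\alpha)$, so every direction at $\alpha$ is good (the image of any disk at $\alpha$ cannot contain $\phi(\alpha)$) and $\phi$ maps $B_{\vec{v}}$ onto a disk of degree $m_{\phi}(\alpha, \vec{v})$. This disk contains $\phi(\zeta_{Gauss}) = \zeta_{Gauss}$, whose only preimage is $\zeta_{Gauss} \in B_{\vec{v}}$ with local multiplicity $m_{\phi}(\zeta_{Gauss}) = d$; hence the degree of the disk map, and so $m_{\phi}(\alpha, \vec{v})$, equals $d$. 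By Theorem \ref{tool3}, $\phi$ therefore scales $\rho$ by $d$ on a subarc through $\alpha$ in the direction $\vec{v}$, and in particular is locally injective there. Since a locally injective path in a tree is injective, $\phi$ restricts to a homeomorphism of $[\zeta_{Gauss}, \xi]$ onto the unique arc joining the fixed points $\zeta_{Gauss}$ and $\xi$, namely $[\zeta_{Gauss}, \xi]$ itself, with metric derivative identically $d$. Integrating yields $\rho(\zeta_{Gauss}, \xi) = d\,\rho(\zeta_{Gauss}, \xi)$, which forces $\rho(\zeta_{Gauss}, \xi) = 0$ because $d \geq 2$; this contradicts $\xi \neq \zeta_{Gauss}$, and the lemma follows.

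I expect the main obstacle to be the third paragraph: pinning the directional multiplicity toward $\zeta_{Gauss}$ to be exactly $d$ at every point of the arc, and then upgrading the purely local metric-scaling statement of Theorem \ref{tool3} to a global homeomorphism of the arc onto itself. Confirming that $\phi|_{[\zeta_{Gauss},\xi]}$ is genuinely injective, not merely locally so, is precisely where the uniquely path-connected (loop-free) structure of $\PP^{1}_{an}$ is essential; once that is in hand, the length comparison is immediate and everything else reduces to short bookkeeping with multiplicities and the definition of $\rho$.
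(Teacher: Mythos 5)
Your proposal is correct and follows essentially the same route as the paper: the equivalence is deduced from Theorem \ref{tool4} via M\"obius conjugation, and uniqueness is obtained by placing the arc between two putative totally ramified type $2$ fixed points inside the connected locus of total ramification (Theorem \ref{tool5}) and then using Theorem \ref{tool3} to show $\phi$ expands this finite-length arc by the factor $d\geq 2$, a contradiction. Your third paragraph merely fills in details (the directional multiplicity being exactly $d$ and the injectivity of $\phi$ on the arc) that the paper leaves implicit.
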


\begin{proof}
    Let us assume that $\phi$ has two totally ramified type~II fixed points $\zeta$ and $\xi$. From Lemma \ref{bijlemma}, $\phi$ maps the arc $[\zeta, \xi]$ bijectively onto itself. From Theorem \ref{tool5}, the arc $[\zeta, \xi]$ is contained in the locus of total ramification. As both $\zeta$ and $\xi$ are type~II points, the length of the arc $[\zeta, \xi]$ is finite under $\rho$. Using Theorem \ref{tool3}, we can cover the arc $[\zeta,\xi]$ with finitely many subarcs such that $\phi$ elongates the length of each of these subarcs by the degree of $\phi$. As degree of $\phi$ is $\geq 2$, we reach a contradiction.

    The last part of the theorem is a direct corollary of Theorem \ref{tool4}.
\end{proof}

\begin{lemma}\label{3.1}
    Let $\phi$ be a degree $\geq 2$ rational map, defined over $K$, with potential good reduction. Let $\zeta$ be the type~II totally ramified fixed point of $\phi$. Let $\vec{v}$ be a direction at $\zeta$ that is fixed under $T_{\zeta}\phi$. Let $B_{\vec{v}}$ be the open Berkovich disk with boundary point $\zeta$, in the direction of $\vec{v}$. Then, there is a point $\beta \in B_{\vec{v}}$ such that $\beta$ lies on the arc $[\zeta, \phi(\beta)]$.
\end{lemma}

\begin{proof}
    As $\vec{v}$ is a fixed direction under the reduction of $\phi$ at $\zeta$, by Theorems \ref{tool1} and \ref{tool4} we get $\phi(B_{\vec{v}}) = B_{\vec{v}}$. Moreover, by Theorem \ref{tool3}, we can find a point $\gamma$ sufficiently close to $\zeta$ such that $\rho (\zeta, \phi(\delta)) = m_{\phi}(\zeta, \vec{v}) \rho(\zeta, \delta)$, for every $\delta \in [\zeta, \gamma]$. If $\gamma \in [\zeta, \phi(\gamma)]$, then the point $\gamma$ is our required point $\beta$ in the lemma. Suppose that $\gamma$ does not lie in $[\zeta, \phi(\gamma)]$. Consider the unique point $\beta$ lying in the intersection of the three arcs $[\zeta, \gamma], [\zeta, \phi(\gamma)]$ and $[\gamma, \phi(\gamma)]$. We claim that $\beta$ satisfies the conditions of the lemma. As $\zeta$ is totally ramified and fixed under $\phi$ and the open Berkovich disk $B_{\gamma}(\beta)$ containing $\gamma$ with boundary point $\beta$ does not contain $\zeta$, the image $\phi(B_{\gamma}(\beta))$ is a open Berkovich disk containing $\phi(\gamma)$ with boundary point $\phi(\beta).$ Let us assume that $\phi(B_{\gamma}(\beta))$ contains $\beta$. As $\phi(B_{\gamma}(\beta))$ does not contain $\zeta$, the point $\phi(\beta)$ (which is the boundary point of $\phi(B_{\gamma}(\beta))$) must lie on the arc $[\zeta, \beta]$. But, this contradicts Theorem \ref{tool3}. So, $\phi(B_{\gamma}(\beta))$ does not contain $\beta$. As $\phi(B_{\gamma}(\beta))$ contains $\phi(\gamma)$, hence also the arc $(\phi(\beta), \phi(\gamma)]$, the point $\beta$ can not lie in $(\phi(\beta), \phi(\gamma)]$. As $\beta$ lies in $[\zeta, \phi(\gamma)]$, the point $\beta$ must lie in $[\zeta, \phi(\beta)]$.
\end{proof}

\begin{lemma}\label{3.8}
    Let $\phi$ be a rational map over $K$ of degree $\geq 2$ with potential good reduction. Let $\zeta$ be the type~II totally ramified fixed point of $\phi$. Let $\vec{v}$ be a residually fixed critical direction at $\zeta$, i. e. $\vec{v}$ is a fixed critical point under the map $T_{\zeta}\phi$. Let $\xi$ be a fixed point of $\phi$ lying in the open Berkovich disk $B_{\vec{v}}$. Then, $\xi$ must be of type~I, attracting and the unique fixed point of $\phi$ lying in $B_{\vec{v}}$. 
\end{lemma}

\begin{proof}
    Let us assume that $\xi$ is not of type~I. Consider the arc $[\zeta, \xi]$. From Lemma \ref{bijlemma}, $\phi([\zeta, \xi]) = [\zeta, \xi]$ and $\phi$ is one-one on $[\zeta, \xi]$. As $\xi$ is not of type~I, the arc $[\zeta, \xi]$ is of finite length under the big metric $\rho$ (see Equation \eqref{bigmet}). Hence, by Theorem \ref{tool3}, the multiplicity of $\phi$ at every point on $(\zeta, \xi)$ must be $1$. From Theorem \ref{tool2}, the direction $\vec{v}$ is not a residual critical direction of $\phi$. We arrive at a contradiction.

    Next we need to show that this classical fixed point must be attracting. Consider the arc $[\zeta, \xi]$. This arc maps to itself bijectively under $\phi$. As $\vec{v}$ is a critical point of $T_{\zeta}\phi$, from Theorem \ref{tool2} the directional multiplicity of $\phi$ along the direction $\vec{v}$ is strictly greater than $1$. From Theorem \ref{tool3}, there is a point $\beta$ lying on $[\zeta, \xi]$, such that $\phi(\beta)$ lies on the arc $(\beta, \xi)$. As $\phi$ is one-one on $[\zeta, \xi]$, for every point $z$ lying on the arc $[\beta, \xi)$ the point $\phi(z)$ lies on the arc $(z, \xi)$. Hence, $\xi$ is an attracting fixed point.

    Lastly we need to show uniqueness of $\xi$. Let $\eta$ be another fixed point of $\phi$ lying in $B_{\vec{v}}$. Then it must be of type~I and attracting. Consider the arc $[\zeta, \eta]$. By the same argument as in the previous paragraph, every point in $(\zeta, \eta)$ is attracted towards $\eta$ under $\phi$. As $\xi$ and $\eta$ lie in the same direction at $\zeta$, the arcs $[\zeta, \xi]$ and $[\zeta, \eta]$ share an initial segment. Let the intersection $[\zeta, \xi] \cap [\zeta, \eta]$ be $[\zeta, \alpha]$. Then, $\phi(\alpha)$ lies on the intersection $(\alpha, \xi] \cap (\alpha, \eta]$ which is empty. We arrive at a contradiction.  
\end{proof}

The following lemma is well-known (See \cite[Lemma~2.1]{bene1}), but we give a proof of it here.

\begin{lemma}\label{norep}
    Let $\phi$ be a rational map of degree $\geq 2$ with potential good reduction. There is no classical repelling fixed point of $\phi$.
\end{lemma}

\begin{proof}
    Without loss of generality, we can assume that the map $\phi$ has good reduction. Let $z$ be a classical repelling fixed point of $\phi$. Consider the arc $[z, \zeta_{Gauss}]$. From Lemma \ref{bijlemma}, the rational map $\phi$ maps the arc $[z, \zeta_{Gauss}]$ bijectively onto itself. As $z$ is repelling, there exists a point $\omega$ lying on the arc $(z, \zeta_{Gauss}]$ such that $\phi(\omega)$ lies on the arc $(\omega, \zeta_{Gauss}]$. So $\phi$ maps the arc $[\omega, \zeta_{Gauss}]$ bijectively onto its subarc $[\phi(\omega), \zeta_{Gauss}]$. But this contradicts Theorem \ref{tool3}. Hence, the lemma is proved. 
\end{proof}

\begin{theorem}\label{maintheorem}
    Let $\phi$ be a rational map of degree $\geq 2$ with potential good reduction. Then, the number of connected components of the fixed locus of $\phi$ is $1 \; +$ the number of attracting fixed points of $\phi$.
\end{theorem}

\begin{proof}
    Without loss of generality, we can assume that the map $\phi$ has good reduction. From Lemma \ref{norep}, we know that $\phi$ does not have any classical repelling fixed points. To prove the theorem, it is enough to show that every fixed point in the hyperbolic space and every classical indifferent fixed point lie in the same connected component as the Gauss point. 

    Let $\xi$ be a hyperbolic fixed point of $\phi$. Consider the arc $[\zeta_{Gauss}, \xi]$. As $\zeta_{Gauss}$ is totally ramified and fixed under $\phi$, from Lemma \ref{bijlemma} the rational map $\phi$ maps the arc $[\zeta_{Gauss}, \xi]$ bijectively onto itself. As the arc $[\zeta_{Gauss}, \xi]$ is of finite length under the big metric $\rho$, from Theorem \ref{tool3} the multiplicity of any point lying in $(\zeta_{Gauss}, \xi)$ under $\phi$ must be $1$. So the entire arc $[\zeta_{Gauss}, \xi]$ must consist of fixed points.

    Let $z$ be a classical indifferent fixed point. Consider the arc $[\zeta_{Gauss}, z]$. From Lemma \ref{bijlemma}, we get that $\phi$ maps $[\zeta_{Gauss}, z]$ bijectively onto itself. From Lemma \ref{3.8}, we get that the directional multiplicity of $\phi$ at $\zeta_{Gauss}$ along the direction containing $z$ must be $1$. From Theorem \ref{tool4}, every point on the arc $[\zeta_{Gauss}, z]$ is of multiplicity $1$. Let $\eta$ be a point in $[\zeta_{Gauss}, z)$. The length of the arc $[\zeta_{Gauss}, \eta]$ under the big metric is finite. As every point on this arc has multiplicity $1$, we can cover the arc with finitely many subarcs such that Theorem \ref{tool3} holds on each of there arc. Thus, $\rho(\zeta_{Gauss}, \phi(\eta))$ must be equal to $\rho(\zeta_{Gauss}, \eta)$. As $\phi$ maps the arc $[\zeta_{Gauss}, z]$ to itself with no backtracking (Lemma \ref{bijlemma}), $\eta$ must be a fixed point of $\phi$. Thus, the entire arc $[\zeta_{Gauss}, z]$ consists of fixed points.    
\end{proof}

\begin{remark}\label{rem:unique_rep_per_pt}
    Observe that from the proof of Theorem \ref{maintheorem} and Lemma \ref{norep}, we get that a rational map with potential good reduction has a unique repelling fixed point. The map $\phi$ having potential good reduction is equivalent to $\phi^{n}$ having potential good reduction for every $n \in \N$. Thus, one observes that a map with potential good reduction has a unique repelling periodic point (which is necessarily a fixed point) in $\PP^{1,an}$. This is a well-known result due to Rivera-Letelier (See \cite[Theorem~2]{rl4}).  
\end{remark}

\begin{remark}\label{rem:bene_new_proof}
    Let $\phi$ be a rational map with potential good reduction with $\zeta$ being the unique repelling fixed point of $\phi$. Let $a$ be a classical indifferent fixed point of $\phi$. From Theorem \ref{maintheorem}, the direction $\vec{v}$ at $\zeta$ towards $a$ is a fixed point of $T_{\zeta}\phi$ which is not critical. Thus, $\vec{v}$ is a good direction with directional multiplicity $1$ under $\phi$. Let $B_{\vec{v}}$ be the open Berkovich disk with boundary point $\zeta$ along the direction $\vec{v}$. As $\vec{v}$ is a good direction with directional multiplicity $1$, $\phi$ maps $B_{\vec{v}}$ to itself bijectively. Thus, any point $b$ in the set $\phi^{-1}(a) \setminus a$ lies along a different direction $\vec{w}$ at $\zeta$. As $\vec{w}$ is a good direction at $\zeta$ which is not fixed under $T_{\zeta}\phi$, any point $c \in \phi^{-1}(b)$ lies along another direction $\vec{u}$ at $\zeta$. Thus, $\zeta$ is the unique point lying on the arcs joining any two points among $\{a,b,c\}$. On the other hand, from Lemma \ref{3.8} any three distinct attracting fixed points of $\phi$ lie in three distinct directions at $\zeta$. Thereby, we obtain another proof of the main theorem of \cite{bene1}.
\end{remark}

\begin{corollary}\label{3.10}
    Let $\phi$ be a rational map of degree $d \geq 2$ with potential good reduction. Then, the number of connected components of fixed locus of $\phi$ is bounded above by $(d+2)$.
\end{corollary}

\begin{proof}
    The corollary follows directly from Theorem \ref{maintheorem}.
\end{proof}

Recall that, the \emph{hyperbolic fixed locus} of a rational map $\phi$ is said to be the intersection of the fixed locus with the Berkovich hyperbolic space. In other words, the hyperbolic fixed locus of $\phi$ is the set of all non-rigid (non-classical) fixed points of $\phi$. From Corollary \ref{3.10}, the following corollary follows directly.

\begin{corollary}\label{3.11new}
    Let $\phi$ be a degree $\geq 2$ rational map, with potential good reduction. Then the hyperbolic fixed locus of $\phi$ is connected. \qed
\end{corollary}

We now study the extreme cases of Theorem \ref{maintheorem}.

\begin{corollary}\label{3.2}
    Let $\phi$ be a degree $\geq 2$ rational map defined over $K$, with potential good reduction. Then the following are equivalent,
    
    $(1)$ the fixed locus of $\phi$ is connected,
    
    $(2)$ the rational map $\phi$ does not have any attracting fixed point,

    $(3)$ the reduction of $\phi$ at the hyperbolic totally ramified fixed point has no fixed critical point. 
\end{corollary}

\begin{proof} 
    Without loss of generality, we can assume that the rational map $\phi$ has good reduction.

    $(1) \iff (2)$ From Theorem \ref{maintheorem}, this is direct.

    $(1) \implies (3)$ Let us assume that the fixed locus of $\phi$ is connected. We want to show that the reduction of $\phi$ has no fixed critical point. We will prove by contradiction. Suppose the reduction of $\phi$, $T_{\zeta_{Gauss}}\phi$, has a fixed critical point corresponding to the direction $\vec{v}$ at $\zeta_{Gauss}$. Let $B_{\vec{v}}$ be the open Berkovich disk at $\zeta_{Gauss}$ in the direction of $\vec{v}$. As the direction $\vec{v}$ is fixed under the reduction of $\phi$ and $\phi$ has good reduction, by Theorems \ref{tool1} and \ref{tool4} we get $\phi(B_{\vec{v}}) = B_{\vec{v}}$. Moreover, the direction $\vec{v}$ is a critical point for $T_{\zeta_{Gauss}}\phi$. According to Theorem \ref{tool2}, part $(3)$, this is equivalent to the fact that the directional multiplicity of $\phi$ at $\zeta_{Gauss}$ in the direction of $\vec{v}$ is $2$ or higher. Hence, by Lemma \ref{3.1} and Theorem \ref{tool3} we can choose a type~II point $\beta \in B_{\vec{v}}$ sufficiently close to $\zeta_{Gauss}$ such that $\phi(\beta) \neq \beta$ and $\beta$ lies on the arc $[\zeta_{Gauss}, \phi(\beta)]$. Let $\vec{w}$ be the direction at $\beta$ containing $\phi(\beta)$, and $B_{\vec{w}}$ is the corresponding open Berkovich disk with boundary point $\beta$. We claim that $\phi(B_{\vec{w}}) \subset B_{\vec{w}}$. To prove the claim, first observe that as $\phi$ has good reduction and $\zeta_{Gauss} \notin B_{\vec{w}}$, $\phi(B_{\vec{w}})$ can not contain $\zeta_{Gauss}$. As $\beta$ and $\zeta_{Gauss}$ lie in the same direction at $\phi(\beta)$, $\phi(B_{\vec{w}})$ is an open Berkovich disk at $\phi(\beta)$ which does not contain $\beta$ or $\zeta_{Gauss}$. Hence, $\phi(B_{\vec{w}}) \subset B_{\vec{w}}$. Consider the non-empty closed connected set $X = \bar{B}_{\vec{w}}$, the weak closure of $B_{\vec{w}}$. As $\phi(X) \subset X$, by Theorem \ref{tool6} there exists a fixed point of $\phi$ in $\bar{B}_{\vec{w}}$. Let us denote this point as $\eta$. The arc $[\zeta_{Gauss}, \eta]$ is contained in $B_{\vec{v}}$. As directional multiplicity of $\phi$ at $\zeta_{Gauss}$ in the direction of $\vec{v}$ is $2$ or higher, the arc $[\zeta_{Gauss}, \eta]$ contains points, such as $\beta$, that are not fixed under $\phi$. Hence, $\zeta_{Gauss}$ and $\eta$ lie in distinct connected components of the fixed locus. We arrive at a contradiction.

    $(3) \implies (1)$ For the converse, as $\phi$ has good reduction, the Gauss point $\zeta_{Gauss}$ is a totally ramified fixed point of $\phi$. Let $\xi$ be another fixed point of $\phi$. To prove connectedness of the fixed locus, it is enough to show that every point on the arc $[\zeta_{Gauss}, \xi]$ is fixed under $\phi$. Let $B_{\xi}(\zeta_{Gauss})$ be the open Berkovich disk containing $\xi$ with boundary point $\zeta_{Gauss}$. As $\xi$ is fixed, the set $\phi(B_{\xi}(\zeta_{Gauss}))$ contains $B_{\xi}(\zeta_{Gauss})$. As the Gauss point is totally ramified and fixed under $\phi$, $\phi(B_{\xi}(\zeta_{Gauss})) = B_{\xi}(\zeta_{Gauss})$. So, the direction at $\zeta_{Gauss}$ corresponding to $B_{\xi}(\zeta_{Gauss})$ is fixed under the reduction of $\phi$. By assumption of the corollary, this direction is not a critical point for the reduction of $\phi$. So the directional multiplicity of $\phi$ at $\zeta_{Gauss}$ in the direction of $B_{\xi}(\zeta_{Gauss})$ is $1$. From Theorem \ref{tool4}, along every direction at $\zeta_{Gauss}$ the multiplicity function of points is non-increasing. Hence, the multiplicity of every point in $B_{\xi}(\zeta_{Gauss})$ is $1$. As $\zeta_{Gauss}$ and $\xi$ are both fixed points and every point on the arc $[\zeta_{Gauss}, \xi]$ has multiplicity $1$, an argument similar to the one in Theorem \ref{maintheorem} shows that the entire arc $[\zeta_{Gauss}, \xi)$ consists of fixed points of $\phi$. Hence, $[\zeta_{Gauss}, \xi]$ lies in the fixed locus of $\phi$.
\end{proof}

Next, we give a few equivalent criterion for the fixed locus to be a finite set.

\begin{corollary}\label{3.11}
    Let $\phi$ be a degree $d \geq 2$ rational map with potentially good reduction. Let $\zeta$ be the unique type~II totally ramified fixed point of $\phi$. Then, the following are equivalent,
    
    $1)$ Every fixed point of $T_{\zeta}\phi$ is a critical point.

    $2)$ Every classical fixed point of $\phi$ is attracting.

    $3)$ The fixed locus of $\phi$ consists of the classical fixed points of $\phi$ and $\zeta$.
    
    $4)$ The fixed locus of $\phi$ is a finite set.
\end{corollary}

\begin{proof}
    Without loss of generality, we can assume that $\phi$ has good reduction, i.e. $\zeta = \zeta_{Gauss}$. 
    
    $(1) \implies (2)$ As every fixed point of $T_{\zeta_{Gauss}}\phi$ is critical, by Theorem \ref{tool2}, every fixed direction of $\phi$ at the Gauss point has directional multiplicity $\geq 2$. By Theorem \ref{tool3}, the Gauss point is an isolated point of the fixed locus of $\phi$. By Theorem \ref{maintheorem}, every classical fixed point of $\phi$ must be attracting.

    $(2) \implies (3)$ Follows directly from Theorem \ref{maintheorem}.

    $(3) \implies (4)$ This is direct.

    $(4) \implies (1)$ We prove this by contradiction. Suppose that a fixed point $a$ of $T_{\zeta_{Gauss}}\phi$ is not critical. Then, the direction at the Gauss point corresponding to $a$ has directional multiplicity $1$ under $\phi$. From Lemma \ref{3.1}, there is an arc $[\zeta_{Gauss}, \beta]$ along the direction represented by $a$ such that every point on this arc is a fixed point of $\phi$. Thus, the fixed locus of $\phi$ consists of infinitely many points.
\end{proof}

Next, we give a family of examples which shows that the bound mentioned in Corollary \ref{3.10} is \emph{sharp}.

\begin{example}\label{3.12}
    Let $d \geq 2$ be a natural number. Consider the Berkovich projective line over an algebraically closed field $K$, complete with respect to a non-trivial non-Archimedean absolute value, such that the residue characteristic of $K$ divides $d$. Consider the map, $\phi(z) = z^{d}$. Every direction at $\zeta_{Gauss}$ is totally ramified under $\phi$. Hence, $\zeta_{Gauss}$ is an isolated fixed point of $\phi$. The other fixed points of $\phi$ must be of type~I. They consists of $\infty$ and the roots of $z^{d}-z$ in $K$. Hence, the fixed locus of $\phi$ consists of $(d+2)$ many distinct isolated fixed points of $\phi$.
\end{example}    

\begin{example}\label{ex:sep_fin}
    While in Example \ref{3.12}, the maps had inseparable reduction at the Gauss point, this is not necessary for attaining the bound. Here we will produce a rational function with good reduction that satisfies the equivalent conditions of Corollary \ref{3.11}, without having inseparable reduction at the Gauss point. Consider the map,
    \begin{equation*}
        \phi(z) = \frac{z^{3} + 2z^{2}}{z + 1},
    \end{equation*}
    defined over a field $K$ with residue characteristics $3$. One directly observes that the map $\phi$ has good reduction and $T_{\zeta_{Gauss}}\phi$ has the same expression as $\phi$. The numerator and denominator of $T_{\zeta_{Gauss}}\phi$ are coprime to eah other. The point $\infty$ is a superattracting fixed point of $T_{\zeta_{Gauss}}\phi$. The other fixed points of $T_{\zeta_{Gauss}}\phi$ are the solutions of $(z^{3} + 2z^{2}) - z (z+1) = z(z^{2} + z -1)$. On the other hand, the critical points of $T_{\zeta_{Gauss}}\phi$ other than $\infty$ are the solutions of $(z+1)(3z^{2} + 4z) - (z^{3} + 2z^{2}) = -z (z^{2} + z - 1)$. Thus, every fixed point of $T_{\zeta_{Gauss}}\phi$ is a critical point. From Theorem \ref{3.11}, the fixed locus of $\phi$ is a finite set consisting of $\zeta_{Gauss}, 0, \infty$ and the solutions of $z^{2} +z -1$.  
\end{example}

Next, we characterize the fixed locus of a rational map with potential good reduction. We call a M\"{o}bius map, defined over an algebraically closed field $K$, a \emph{translation map} if it has a unique fixed point in the classical projective line over $K$. Let $A_{\phi}$ denote the finite set of points of $\PP^{1,an}$ consisting of all classical indifferent fixed points and the repelling fixed points of $\phi$. Let $\text{Hull}(A_{\phi})$ be the \emph{connected hull} of the points lying in $A_{\phi}$, i.e. the smallest closed connected subset of $\PP^{1,an}$ containing the points of $A_{\phi}$ (compare \cite[Page~441]{faber1}). Following Rumely's notion in \cite[Page~849]{rume1}, a type~II indifferent fixed point of $\phi$ is called \emph{id-indifferent} if the reduction of $\phi$ at that point is the identity map. A point $\zeta$ lying on a connected subset $S$ of $\PP^{1,an}$ is called a \emph{branch point} if $S \backslash \{\zeta\}$ has more than $2$ connected components.

\begin{corollary}\label{3.6}
    Let $\phi$ be a rational map with potential good reduction. The fixed locus of $\phi$, denoted $\text{Fix}(\phi)$, contains $\text{Hull}(A_{\phi})$. Apart from id-indifferent points, the only possible branch point of $\text{Fix}(\phi)$ is the totally ramified fixed point of $\phi$. 
\end{corollary}

\begin{proof}
    From the proof of Theorem \ref{maintheorem}, we get that the $\text{Hull}(A_{\phi})$ is contained in the fixed locus of $\phi$.
    
    Let $\zeta$ be a branch point of $\text{Fix}(\phi)$ such that $\zeta$ is not id-indifferent. Then, there are at least $3$ directions at $\zeta$ which are fixed under the reduction of $\phi$ at $\zeta$. As $\zeta$ is not id-indifferent, the reduction of $\phi$ at $\zeta$ must have degree $\geq 2$. By Theorem \ref{tool2}, $\zeta$ must be a repelling type~II fixed point of $\phi$. By Remark \ref{rem:unique_rep_per_pt}, $\zeta$ must be the unique totally ramified fixed point of $\phi$.   
\end{proof}

The following lemma is a particular case of the \emph{first identification lemma}, due to Rumely \cite[Lemma~2.1]{rume1}.

\begin{lemma}\label{tool8}
    Let $\phi$ be a rational map of degree $\geq 2$ with potential good reduction. Let $\zeta \in \PP^{1,an}$ be the type~II totally ramified, fixed point of $\phi$. Let $\vec{v}$ be a direction at $\zeta$, and $B_{\vec{v}}$ the corresponding open Berkovich disk with boundary point $\zeta$. Then, fixed point multiplicity of $\vec{v}$ under the reduction of $\phi$ at $\zeta$ is the same as the number of classical fixed points of $\phi$ lying in $B_{\vec{v}}$, counted with fixed point multiplicity.
\end{lemma}

\begin{proof}
    As $\zeta$ is totally ramified and fixed, from Theorem \ref{tool4} every direction at $\zeta$ is a good direction under $\phi$. Thus, the surplus multiplicity of any direction at $\zeta$ under $\phi$ is zero. The lemma follows as a particular case of the first identification lemma of Rumely (\cite[Lemma~2.1]{rume1}).
\end{proof}

\begin{proposition}\label{noid}
    Let $\phi$ be a rational map of degree $\geq 2$ with potential good reduction. If $\text{Hull }(A_{\phi})$ does not contain any id-indifferent point, then $\text{Fix}(\phi) = \text{Hull }(A_{\phi}) \cup \{\text{the set of attracting fixed points of } \phi \}$. 
\end{proposition}

\begin{proof}
    Without loss of generality, we can assume that the map $\phi$ has good reduction. Let $\xi$ be a non-attracting fixed point of $\phi$ that does not lie in $\text{Hull}(A_{\phi})$. Thanks to Lemma \ref{norep}, any non-attracting classical fixed point of $\phi$ must be indifferent. Thus, $\xi$ can not be a classical point. Also, by our assumption, $\xi$ is distinct from the Gauss point. From the proof of Theorem \ref{maintheorem}, $\xi$ lies in the connected component of $\text{Fix}(\phi)$ containing the Gauss point. Thus, the arc $[\zeta_{Gauss}, \xi]$ consists of fixed points of $\phi$. So the direction $\vec{v}$ at the Gauss point towards $\xi$ is a fixed point of $T_{\zeta_{Gauss}}\phi$, which is not a critical point of $T_{\zeta_{Gauss}}\phi$. From Lemma \ref{tool8}, there must be at least one classical fixed point lying in the open Berkovich disk $B_{\vec{v}}$, with boundary point $\zeta_{Gauss}$. Let $a$ be such a classical fixed point of $\phi$ lying in $B_{\vec{v}}$. From Lemma \ref{3.8}, $a$ must be an indifferent fixed point. As $\xi$ is not contained in Hull$(A_{\phi})$, $\xi$ does not lie on the arc $[\zeta_{Gauss}, a]$. Let $\eta$ be the unique type~II point lying on the three arcs $[\zeta_{Gauss}, \xi]$, $[\zeta_{Gauss}, a]$ and $[a, \xi]$. As $\eta$ lies in $[\zeta_{Gauss}, \xi]$, it must be a fixed point of $\phi$. Let $B_{a}$ be the open Berkovich disk with boundary point $\eta$, containing $a$. As $\phi(B_{a})$ contains $a$ but does not contain the Gauss point, $\phi(B_{a}) = B_{a}$. Thus, all the three distinct directions at $\eta$ containing $\zeta_{Gauss}, \xi$ and $a$ are fixed directions under the reduction of $\phi$ at $\eta$. As $\eta$ lies on the arc $[\zeta_{Gauss}, a] \subset Hull(A_{\phi})$, by assumption of the corollary $\eta$ is not id-indifferent. Thus, $\eta$ must be a repelling fixed point of $\phi$. As $a$ and $\xi$ lie along the same direction at $\zeta_{Gauss}$, $\eta$ is distinct from $\zeta_{Gauss}$. But the Gauss point is the unique repelling fixed point of $\phi$ (See Remark \ref{rem:unique_rep_per_pt}). We arrive at a contradiction.
\end{proof}

\begin{example}\label{3.3}
    Suppose that $K$ has residue characteristic $\neq 2$. Consider the rational function,
    \begin{equation}
        \phi(z) = \frac{z^{2}-z}{z-2}.
    \end{equation}
    The map $\phi$ has good reduction and it has the same expression as its residual map $T_{\zeta_{Gauss}\phi}$. The residual map has fixed points at $\infty$ and $0$, none of which are critical points of the residual map. By Corollary \ref{3.2}, $\phi$ has no attracting fixed point and its fixed locus must be connected. By Corollary \ref{3.6}, $\text{Fix}(\phi)$ contains the arc $[0, \infty]$ which is the connected hull of $A_{\phi}$. 
    
    Let $a \in K^{\times}$ with $|a| < 1$. Normalization of the conjugation of $\phi$ by the M\"{o}bius map $z \mapsto az$ results in the map $z \mapsto \frac{az^{2} - z}{az - 2}$. Thus, reduction of the $\phi$ at any point on the arc $(\zeta_{Gauss}, 0)$ is of the form $z \mapsto \frac{z}{2}$. Thus, no point on the arc $[\zeta_{Gauss}, 0]$ is id-indifferent for $\phi$. So the intersection of $\text{Fix}(\phi)$ with the open Berkovich disk with boundary point $\zeta_{Gauss}$ and containing $0$ is the arc $(\zeta_{Gauss}, 0]$.

    Let $a \in K^{\times}$ with $|a| > 1$. Normalization of the conjugation of $\phi$ by the M\"{o}bius map $z \mapsto az$ results in the map $z \mapsto \frac{z^{2} - z/a}{z - 2/a}$. As the residue characteristic of $K$ is $\neq 2$, the reduction of $\phi$ at any point on the arc $[\zeta_{Gauss}, \infty]$ is the identity map. In such a case, we do not know how to determine the exact fixed locus of $\phi$ near id-indifferent points.
\end{example}

\begin{example}\label{3.7}
    We give an example of a rational map where we can determine the entire fixed locus of it. Consider the rational map over $K$,

    \begin{equation}
        \phi(z) = \frac{2z^{2} + z}{z + 2}.
    \end{equation}
    The numerator and denominator of $\phi$ share a root over fields of characteristic $2$ or $3$. So $\phi$ has good reduction as long as residue characteristic is $\neq 2,3$. Moreover, the classical fixed points of $\phi$ are $0,1$ and $\infty$. The multiplier at $0$ and $\infty$ are $1/2$ and the multiplier at $1$ is $4/3$. Thus, if $K$ has residue charateristic $2$ or $3$, then $\phi$ has a repelling fixed point. By Lemma \ref{norep}, $\phi$ does not have potential good reduction over fields of residue characteristic $2$ or $3$. So, we study the fixed locus of $\phi$ under the assumption that residue characteristics of $K$ is $\neq 2,3$. Over such a field $K$, $\phi$ has no attracting fixed point. By Theorem \ref{maintheorem}, the fixed locus of $\phi$ is connected. By Corollary \ref{3.6}, $\text{Fix}(\phi)$ contains the connected hull of $\{0,1,\infty\}$. A similar computation as in the previous example shows that there are no id-indifferent fixed points of $\phi$ lying on the connected hull of $\{0,1,\infty\}$. Thus, the fixed locus of $\phi$ is the connected hull of ${0,1,\infty}$ in $\PP^{1,an}$.
\end{example}

Let $n$ be a natural number. Parallel to the fixed locus, one defines the \emph{Berkovich $n$-periodic locus} of a rational map $\phi$ to be the set of all $n$-periodic points of $\phi$. The set of all non-classical points lying on the Berkovich $n$-periodic locus of a raional map $\phi$ will be called the \emph{hyperbolic $n$-periodic locus} of $\phi$. Let $\zeta \in \PP^{1,an}$ be a periodic point of $\phi$. The \emph{exact period of $\zeta$} is the smallest natural number $n$ such that $\phi^{n}(\zeta) = \zeta$. A classical periodic point $a$ of exact period $n$ is said to be \emph{attracting} if $|(\phi^{n})'(\zeta)| < 1$.

A rational map $\phi$ defined over $K$ has potential good reduction if and only if the $n$th self-iterate of $\phi$, denoted $\phi^{n}$, has potential good reduction, for every $n \in \N$. Thus from the results above, we get the following direct corollary.

\begin{corollary}\label{cor:berk_per_loc}
    Let $\phi$ be a degree $\geq 2$ rational map with potential good reduction. Then, 

    $1)$ The number of connected components of the Berkovich $n$-periodic locus is $1+$ the number of attracting $n$-periodic points.
    
    $2)$ The hyperbolic $n$-periodic locus of $\phi$ is connected, for every $n \in \N$.

    $3)$ The Berkovich $n$-periodic locus of $\phi$ has at most $(d^{n}+2)$ many components. 
    
    $4)$ The Berkovich $n$-periodic locus of $\phi$ is finite if and only if every fixed point of $T_{\zeta}\phi^{n}$ is a critical point, where $\zeta$ is the unique repelling fixed point of $\phi$. \qed
\end{corollary}


\begin{thebibliography}{10}
    \bibitem[1]{brume}
        Baker, Matthew and Rumely, Robert S. Potential Theory and Dynamics on the Berkovich Projective Line. United States: American Mathematical Society, 2010.
    \bibitem[2]{berk}
        Berkovich, Vladimir G. Spectral Theory and Analytic Geometry over Non-Archimedean Fields. (1990).  
    \bibitem[3]{bene2}
        Benedetto, Robert L. Hyperbolic Maps in P-Adic Dynamics. Ergodic Theory and Dynamical Systems 21, no. 1 (2001), 1–11.
    \bibitem[4]{bene1}
        Benedetto, Robert L. A criterion for potentially good reduction in non-archimedean dynamics. Acta Arithmetica 165 (2014), 251-256.
    \bibitem[5]{benebook}
        Benedetto, Robert L. Dynamics in one non-archimedean variable. Graduate Studies in Mathematics, vol. 198, American Mathematical Society, Providence, RI, 2019.
    \bibitem[6]{faber1}
        Faber, Xander W. C. Topology and geometry of the Berkovich ramification locus for rational functions, I. manuscripta math. 142 (2013), 439–474.
    \bibitem[7]{faber2}
        Faber, Xander W. C. Topology and geometry of the Berkovich ramification locus for rational functions, II. Mathematische Annalen 356 (2011), 819-844.
    \bibitem[8]{faber3}
        Faber, Xander W. C. Rational Functions with a Unique Critical Point. International Mathematics Research Notices, Issue 3 (2014), 681–699.
    \bibitem[9]{kiwi1}
        Kiwi, Jan. Rescaling limits of complex rational maps. Duke Mathematical Journal 164 (2012), 1437-1470.
    \bibitem[10]{rl1}
        Rivera-Letelier, Juan. Dynamique des fonctions rationnelles sur des corps locaux, in Geometric methods in dynamics (II) : Volume in honor of Jacob Palis, Astérisque, no. 287 (2003), 147-230.
    \bibitem[11]{rl2}
        Rivera-Letelier, Juan. Espace hyperbolique p-adique et dynamique des fonctions rationnelles. Compositio Mathematica 138 (2003), 199–231.
    \bibitem[12]{rl3}
        Rivera-Letelier, Juan. Sur la structure des ensembles de Fatou p-adiques. arXiv preprint math/0412180 (2004).
    \bibitem[13]{rl4}
        Rivera-Letelier, Juan. Points périodiques des fonctions rationnelles dans l'espace hyperbolique p-adique. Comment. Math. Helv. 80 (2005), no. 3, 593–629.
    \bibitem[14]{rume1}
        Rumely, Robert. A new equivariant in nonarchimedean dynamics. Algebra and Number Theory 11 (4) (2017) 841 - 884.
    \bibitem[15]{rume2}
        Rumely, Robert. The minimal resultant locus. Acta Arithmetica 169.3 (2015), 251-290.
        
        
\end{thebibliography}
\end{document}